\theoremstyle{definition}
  \newtheorem{theorem}{Theorem}[section] %
  \newtheorem{lemma}[theorem]{Lemma}
  \newtheorem{defin}[theorem]{Definition}%
  \newtheorem{problem}[theorem]{Problem}
  \newtheorem{fact}[theorem]{Fact}
  \newtheorem{remark}[theorem]{Remark}%
\numberwithin{equation}{section}
\begin{document}

\title
{
Temperedness criterion of the tensor product of parabolic induction for $GL_n$
}
\author{
Yves Benoist, Yui Inoue and Toshiyuki Kobayashi
}
\date{\empty}
\maketitle %
{\abstract{
We give a necessary and sufficient condition for a pair of parabolic subgroups $P$ and $Q$ of $G=GL_n(\mathbb{R})$ such that the tensor product of any two unitarily induced representations from $P$ and $Q$ are tempered. We also give an $L^p$-estimate of matrix coefficients of the regular representations on $L^2(G/L)$ when $L$ is a Levi subgroup of $G$. 
}}

Key words and phrases: tempered representation, reductive group, tensor product, unitary representation, degenerate principal series representation

\section{Statement of main results}
\label{sec:intro}

For two unitary representations $\Pi_j$ on Hilbert spaces $\mathcal{H}_j$ $(j=1,2)$ of a group $G$, the tensor product representation $\Pi_1\otimes\Pi_2$ is a unitary representation of $G$ defined on the Hilbert completion of $\mathcal{H}_1\otimes\mathcal{H}_2$.
Let $\sigma$ and $\tau$ be unitary representations of parabolic subgroups $P$ and $Q$ of $G=GL_n$, respectively, and $\Pi_1=\operatorname{Ind}^G_P(\sigma)$ and $\Pi_2=\operatorname{Ind}^G_Q(\tau)$ the unitary induction, see Section \ref{subsec:RegGH}. 
In this paper we address the following: 

\begin{problem}
\label{q:main}
When is the tensor product representation $\Pi_1\otimes\Pi_2$ tempered?
\end{problem}

Let us explain some background of this problem.

Problem \ref{q:main} asks a coarse information of the spectrum of the tensor product representation $\Pi_1\otimes\Pi_2$. We note that the disintegration of $\Pi_1\otimes\Pi_2$ is far from being understood even for $G=GL_n$ and even when $\sigma$ and $\tau$ are the trivial one-dimensional representations. 
For the very special case where $P$ is a maximal parabolic subgroup and $Q$ is its opposite parabolic subgroup, the tensor product $\operatorname{Ind}^G_P(\mathbf{1})\otimes\operatorname{Ind}^G_Q(\mathbf{1})$ is unitarily equivalent to the regular representation for a reductive symmetric space of $G=GL_n$, for which the Plancherel-type theorem is known up to some complicated vanishing condition of cohomologically induced representations with singular parameters which may affect an answer to Problem \ref{q:main} (\cite[Sect.\ 1]{BK-I}, \cite[Rem.\ 1.4]{BK-IV} and references therein). 
Slightly more generally, when both $P$ and $Q$ are arbitrary maximal parabolic subgroups, Problem \ref{q:main} was solved recently in \cite[Prop.\ 5.9]{BK-II} without the Plancherel-type formula. 
On the other hand, if $P$ or $Q$ is a Borel subgroup, Problem \ref{q:main} has an affirmative answer by the general theory (Remark \ref{rem:tensor}). However, for the general $P$ and $Q$, an answer to Problem \ref{q:main} has not been known. 
In this general setting, we note that the diagonal action of $G$ on $(G\times G)/(P\times Q)$ is not necessarily (real) spherical, and that the multiplicity of irreducible unitary representations in the disintegration of $\operatorname{Ind}^G_P(\mathbf{1})\otimes\operatorname{Ind}^G_Q(\mathbf{1})$ may be infinite, cf.\ \cite{KZ}.

Tempered representations of a locally compact group $G$ are unitary representations that are weakly contained in $L^2(G)$ (Definition \ref{def:temp}). For real reductive Lie groups $G$, irreducible ones
were classified by Knapp and Zuckerman \cite{KZ}, and are cornerstones both in Harish-Chandra's theory of the Plancherel formula of $L^2(G)$ and in Langlands' classification theory of irreducible admissible representations, whereas the Selberg's $1/4$ conjecture for congruence subgroups can be reformulated as the temperedness of certain unitary representations of $SL_2(\mathbb{R})$ and the Gan--Gross--Prasad conjecture is formulated as a branching problem for tempered representations. 
A complete description of pairs $(G,H)$ of real reductive algebraic groups for which $L^2(G/H)$ is not tempered was accomplished in \cite{BK-III}, but such a classification has not been known for non-reductive subgroups $H$ except for a few cases \cite[Cor.\ 5.8]{BK-II}.

In this article, we give a solution to Problem \ref{q:main}. 
We shall prove that the solution depends only on the $G$-conjugacy classes of Levi parts of parabolic subgroups $P$ and $Q$. We introduce the following notation: for a parabolic subgroup $P$ of $GL_n$ with the Levi subgroup $GL_{n_1}\times\cdots\times GL_{n_r}\ (n_1+\cdots+n_r=n)$, we set 
\[
d(P):=\max_{1\leq j\leq r}n_j. 
\]
Then $1\leq d(P)\leq n$ with two extreme cases: $d(P)=1\iff P$ is a Borel subgroup, and $d(P)=n\iff P=G$. We prove:
\begin{theorem}
\label{thm:tensor}
Let $P$ and $Q$ be parabolic subgroups of $G=GL_n(\mathbb{R})$. Then the following three conditions are equivalent:
\begin{itemize}
\item[(i)] The tensor product representation $\operatorname{Ind}^G_P(\mathbf{\sigma})\otimes\operatorname{Ind}^G_Q(\mathbf{\tau})$ is tempered for all unitary representations $\sigma$ of $P$ and $\tau$ of $Q$. 
\item[(ii)] The tensor product representation $\operatorname{Ind}^G_P(\mathbf{1})\otimes\operatorname{Ind}^G_Q(\mathbf{1})$ is tempered. 
\item[(iii)] $d(P)+d(Q)\leq n+1$. 
\end{itemize}
\end{theorem}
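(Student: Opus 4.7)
The plan is to establish the three implications (i)$\Rightarrow$(ii)$\Rightarrow$(iii)$\Rightarrow$(i). The first is immediate by specializing $\sigma=\mathbf{1}_P$ and $\tau=\mathbf{1}_Q$, so the substance lies in the other two directions.

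For (ii)$\Rightarrow$(iii), I argue by contrapositive, reducing to the maximal-parabolic case already settled in \cite[Prop.\ 5.9]{BK-II}. Suppose $d(P)+d(Q)\ge n+2$. Up to conjugation one may choose maximal parabolic subgroups $P'\supset P$ and $Q'\supset Q$ whose Levi factors are $GL_{d(P)}\times GL_{n-d(P)}$ and $GL_{d(Q)}\times GL_{n-d(Q)}$ respectively. Since the flag variety $P'/P$ is compact, Frobenius reciprocity provides an embedding $\mathbf{1}_{P'}\hookrightarrow\operatorname{Ind}_P^{P'}(\mathbf{1}_P)$; induction in stages then yields $\operatorname{Ind}_{P'}^G(\mathbf{1})\hookrightarrow\operatorname{Ind}_P^G(\mathbf{1})$, and the analogous embedding holds for $Q$. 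Tensoring,
\[
\operatorname{Ind}_{P'}^G(\mathbf{1})\otimes\operatorname{Ind}_{Q'}^G(\mathbf{1})\;\hookrightarrow\;\operatorname{Ind}_P^G(\mathbf{1})\otimes\operatorname{Ind}_Q^G(\mathbf{1}).
\]
Since $d(P')+d(Q')=d(P)+d(Q)\ge n+2$, the left-hand side is not tempered by \cite[Prop.\ 5.9]{BK-II}. As temperedness passes to subrepresentations, the right-hand side is not tempered either, negating (ii).

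For (iii)$\Rightarrow$(i), I exploit the finite $G$-orbit structure of $G/P\times G/Q$ under the diagonal $G$-action. The orbits are indexed by non-negative integer matrices $A=(a_{ij})$ whose row sums form the Levi partition of $P$ and whose column sums form the Levi partition of $Q$. Each orbit $O_A\cong G/H_A$ has point stabilizer $H_A=P\cap w_A Q w_A^{-1}$ for a suitable $w_A\in G$; $H_A$ is an intersection of two parabolics of $G$, with reductive Levi part $\prod_{i,j}GL_{a_{ij}}$. A Mackey-type analysis realizes $\operatorname{Ind}_P^G(\sigma)\otimes\operatorname{Ind}_Q^G(\tau)$ as a direct sum over the full-dimensional orbits of induced representations $\operatorname{Ind}_{H_A}^G(\pi_A)$, where $\pi_A$ is a unitary representation of $H_A$ assembled from the restrictions of $\sigma,\tau$ and the modular characters of $P,Q,H_A$. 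The temperedness of each $\operatorname{Ind}_{H_A}^G(\pi_A)$, for every unitary $\pi_A$, is controlled by the criterion of \cite{BK-III}, which reduces it to a $\rho$-type inequality on the Lie algebra of the reductive part $\prod GL_{a_{ij}}$. The hypothesis $d(P)+d(Q)\le n+1$ imposes combinatorial bounds on the $a_{ij}$ that imply this $\rho$-inequality uniformly over all orbits, giving the desired temperedness for all unitary $\sigma,\tau$.

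The principal obstacle is the final step: translating the arithmetic inequality $d(P)+d(Q)\le n+1$ into the root-theoretic temperedness criterion of \cite{BK-III} uniformly across all double-coset stabilizers $H_A$, and simultaneously for arbitrary unitary $\sigma,\tau$ rather than only the trivial characters. The combinatorial bookkeeping over all matrices $A$, together with the sharpness of the value $n+1$, is where the technical heart of the argument resides.
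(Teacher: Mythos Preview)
Your contrapositive argument for (ii)$\Rightarrow$(iii), reducing to the maximal-parabolic case via the embedding $\operatorname{Ind}_{P'}^G(\mathbf{1})\hookrightarrow\operatorname{Ind}_P^G(\mathbf{1})$ and invoking \cite[Prop.~5.9]{BK-II}, is correct and genuinely different from the paper. The paper does not separate the two directions this way; instead it proves (i)$\Leftrightarrow$(ii) in one stroke by the Herz majoration principle (Lemma~\ref{lem:Herz}), and then establishes (ii)$\Leftrightarrow$(iii) by a direct computation on a single subgroup. Your route has the virtue of isolating the ``easy'' direction cleanly.

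For (iii)$\Rightarrow$(i), however, you are overcomplicating the geometry and leaving the real work undone. First, among the finitely many $G$-orbits on $G/P\times G/Q$ exactly \emph{one} is open; the others have measure zero, so the tensor product $\operatorname{Ind}_P^G(\mathbf{1})\otimes\operatorname{Ind}_Q^G(\mathbf{1})$ is already unitarily equivalent to a single $L^2(G/H)$ with $H=P\cap Q^o$ (the paper takes $Q^o=w^{-1}Qw$ for $w$ the longest Weyl element, making $PQ^o$ open dense). No sum over matrices $A$ is needed, and no separate treatment of arbitrary $\sigma,\tau$ is needed either, since Herz majoration reduces (i) to (ii). Second, the temperedness criterion you want is \cite[Thm.~2.9]{BK-II} (Fact~\ref{fact:BK}(1) here), not \cite{BK-III}: the latter treats only reductive $H$, while $H=P\cap Q^o$ is generally not reductive, and the inequality $2\rho_{\mathfrak h}\le\rho_{\mathfrak g}$ genuinely involves the nilradical of $\mathfrak h$, not just its Levi part.

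The actual gap is the one you flag as the ``principal obstacle'', and it is precisely what the paper supplies. A priori $2\rho_{\mathfrak h}\le\rho_{\mathfrak g}$ must be checked at the $2^n$ edge vectors $E_I=\sum_{i\in I}E_{ii}$. The paper's new ingredient is Theorem~\ref{thm:tempGH}, whose core is the combinatorial Lemma~\ref{lem:iImax}: for any $\mathfrak a$-stable subalgebra $\mathfrak h\subset\mathfrak{gl}_n$, the $2^n$ inequalities collapse to the $n$ inequalities at the singletons $E_{ii}$. This is proved by an induction on a quadratic form (Lemma~\ref{lem:quadraineq}). With that in hand, an explicit count gives $2\rho_{\mathfrak h}(E_{ii})=n_{a(i)}+m_{s+1-b(i)}-2$, and the equivalence with $d(P)+d(Q)\le n+1$ is a short pigeonhole argument. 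Your outline does not indicate how to perform this reduction, and without it the proof is incomplete.
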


An analogous statement holds also for $G=GL_n(\mathbb{C})$.

Theorem \ref{thm:tensor} is derived from the following results about the regular representation on $L^2(G/H)$ where $H$ is not necessarily reductive: 

\begin{theorem}
\label{thm:tempGH}
Let $H$ be a closed subgroup of $G=GL_n(\mathbb{R})$ with finitely many connected components. Assume that the Lie algebra $\mathfrak{h}$ is stable by a split Cartan subalgebra $\mathfrak{a}$ of $\mathfrak{g}$. 
Let $\{e_1,\ldots,e_n\}$ be the standard basis of $\mathfrak{a}^*$ such that $\Delta(\mathfrak{g},\mathfrak{a})=\{\pm(e_i-e_j)\mid1\leq i<j\leq n\}$, and $\{E_1,\ldots,E_n\}$ the dual basis of $\mathfrak{a}$. 
Then the following three conditions are equivalent: 
\begin{itemize}
\item[(i)] $\operatorname{Ind}^G_H(\sigma)$ is tempered for any unitary representation $\sigma$ of $H$. 
\item[(ii)] $L^2(G/H)$ is tempered. 
\item[(iii)] $\dim\operatorname{Image}(\operatorname{ad}(E_i)\colon\mathfrak{h}\to\mathfrak{h})\leq n-1$ for all $i\ (1\leq i\leq n)$. 
\end{itemize} 
\end{theorem}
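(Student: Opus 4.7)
\textbf{Proof plan for Theorem \ref{thm:tempGH}.}

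The plan is to prove the implications (i)$\Rightarrow$(ii), (ii)$\Leftrightarrow$(iii), and (ii)$\Rightarrow$(i). The direction (i)$\Rightarrow$(ii) is tautological: specialize $\sigma$ to the trivial unitary representation of $H$, so that $\operatorname{Ind}^G_H(\mathbf{1}) \cong L^2(G/H)$.

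For (ii)$\Rightarrow$(iii), the plan is to invoke a Benoist--Kobayashi-type temperedness criterion for $L^2(G/H)$ in the setting where $\mathfrak{h}$ is stable under a split Cartan $\mathfrak{a}$: the regular representation $L^2(G/H)$ is tempered if and only if
\[
2 \sum_{\alpha \in \Delta(\mathfrak{g},\mathfrak{a})} \dim(\mathfrak{h} \cap \mathfrak{g}_\alpha)\, |\alpha(X)| \;\leq\; \sum_{\alpha \in \Delta(\mathfrak{g},\mathfrak{a})} \dim \mathfrak{g}_\alpha \cdot |\alpha(X)| \qquad \text{for all } X \in \mathfrak{a}.
\]
Specializing at $X = E_i$: since $|\alpha(E_i)| \in \{0,1\}$ takes the value $1$ precisely on the $2(n-1)$ roots involving the index $i$, the right-hand side equals $2(n-1)$, while the left-hand side equals twice the number of $\mathfrak{h}$-root spaces involving $i$, which is $2\dim \operatorname{Image}(\operatorname{ad}(E_i)|_\mathfrak{h})$. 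Hence the criterion at $E_i$ reduces exactly to condition (iii).

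For (iii)$\Rightarrow$(ii), the task is to bootstrap from the $n$ bounds at the $E_i$'s to the full inequality at every $X \in \mathfrak{a}$. The key structural input is that $\mathfrak{h}$, being an $\mathfrak{a}$-stable subalgebra of $\mathfrak{gl}_n$ with one-dimensional root spaces, forces $\Phi_\mathfrak{h} := \{\alpha : \mathfrak{h}_\alpha \neq 0\}$ to be an additively closed subset of the $A_{n-1}$ root system; equivalently, the binary relation $i \to j \iff e_i - e_j \in \Phi_\mathfrak{h}$ is transitive and therefore encodes a partition $\{1,\ldots,n\} = C_1 \sqcup \cdots \sqcup C_r$ together with a partial order on the classes. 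For $X = \sum_k x_k E_k$, the identity $|x_i - x_j| = \int |\mathbf{1}(x_i \leq t) - \mathbf{1}(x_j \leq t)|\, dt$ exhibits the trace inequality as an integral over $t$ of cut inequalities indexed by the subsets $S_t = \{i : x_i \leq t\}$. Writing $s_a := |S \cap C_a|$, the combinatorial statement to be proved is
\[
\sum_{a=1}^r s_a(|C_a| - s_a) \;\leq\; \sum_{\substack{a \neq b \\ C_a, C_b \text{ incomparable}}} s_a(|C_b| - s_b) \qquad \text{for every } S \subseteq \{1, \ldots, n\},
\]
given the local hypothesis $|C_a| - 1 \leq \sum_{b\,\text{incomparable to}\,a} |C_b|$ for every $a$, which is the translation of condition (iii) via the class decomposition. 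I expect this combinatorial verification---which amounts to pairing each within-class crossing of $S$ with a crossing between $S$ and some incomparable class, using the slack provided by (iii)---to be the main obstacle of the proof.

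For (ii)$\Rightarrow$(i), use matrix coefficient domination. Identifying $\operatorname{Ind}^G_H(\sigma)$ with $L^2(G/H; V_\sigma)$, the pointwise Cauchy--Schwarz inequality yields
\[
\bigl|\langle \operatorname{Ind}^G_H(\sigma)(g) f_1, f_2\rangle\bigr| \;\leq\; \bigl\langle L^2(G/H)(g)\, |f_1|,\, |f_2|\bigr\rangle
\]
where $|f|(x) := \|f(x)\|_{V_\sigma}$ is a scalar function in $L^2(G/H)$. Temperedness of $L^2(G/H)$ gives $L^{2+\varepsilon}$-decay on the right for $K$-finite vectors, and pointwise domination transfers the same decay to the matrix coefficients of $\operatorname{Ind}^G_H(\sigma)$, so that $\operatorname{Ind}^G_H(\sigma)$ is tempered.
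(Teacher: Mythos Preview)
Your plan is structurally the same as the paper's. The reductions (i)$\Rightarrow$(ii), (ii)$\Rightarrow$(i) via Herz domination, and (ii)$\Rightarrow$(iii) by specializing the Benoist--Kobayashi inequality at $E_i$ are exactly what the paper does. Your layer-cake identity $|x_i-x_j|=\int|\mathbf 1(x_i\le t)-\mathbf 1(x_j\le t)|\,dt$ is a nice alternative to the paper's linear-fractional argument on edges of Weyl chambers; both reduce the global inequality on $\mathfrak a$ to the family of inequalities at $E_S=\sum_{i\in S}E_{ii}$, and your class decomposition $C_1,\dots,C_r$ is precisely the block decomposition coming from the maximal reductive subalgebra $\mathfrak l\subset\mathfrak h$ that the paper uses. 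Two small points you glossed over: you should first replace $\mathfrak h$ by $\mathfrak h+\mathfrak a$ (this changes neither (ii) nor (iii), by \cite[Cor.~3.3]{BK-II}), and the Herz principle and the almost-$L^2$ characterization are stated for semisimple groups, so one works modulo the center of $GL_n$.

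The only real gap is the combinatorial inequality you isolated, and your suggested ``pairing'' heuristic is not how the paper does it. With $n_a=|C_a|$, $m_a=s_a$, and
\[
f(\mathbf m)=\sum_a m_a(n_a-m_a)-\sum_{\substack{a\ne b\\ \text{incomparable}}} m_a(n_b-m_b),
\]
the hypothesis is $f(\mathbf e_a)=n_a-1-\sum_{b\ \text{incomp.}\ a}n_b\le 0$ for all $a$, and the goal is $f(\mathbf m)\le 0$ whenever $0\le m_a\le n_a$. The paper proves this by induction on $\sum_a m_a$. Writing $a_{aa}=1$ and $a_{ab}=-1$ or $0$ according as $a,b$ are incomparable or comparable, one has $f(\mathbf m)=\sum_{a,b}a_{ab}m_a(n_b-m_b)$. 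If some column sum $\sum_a a_{ab}m_a\ge 1$, then necessarily $m_b\ge 1$ (since off-diagonal entries are $\le 0$), and the identity
\[
f(\mathbf m)=f(\mathbf m-\mathbf e_b)+f(\mathbf e_b)+2-2\sum_a a_{ab}m_a
\]
gives $f(\mathbf m)\le 0$ by induction. If instead every column sum is $\le 0$, then $f(\mathbf m)=\sum_b(n_b-m_b)\sum_a a_{ab}m_a\le 0$ directly. Integrality of the $a_{ab}$ makes these two cases exhaustive. This inductive step is the missing idea in your plan; once you have it, your proof is complete and coincides with the paper's.
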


In general $0\leq\dim\operatorname{Image}(\operatorname{ad}(E_i)\colon\mathfrak{h}\to\mathfrak{h})\leq 2n-1$ for any $\mathfrak{a}$-stable Lie algebra $\mathfrak{h}$ and any $i\ (1\leq i\leq n)$. 
Theorem \ref{thm:tempGH} justifies the ``heuristic recipe'' in \cite[Rem.\ 5.7]{BK-II} for subgroups of three-by-three block matrix form. 


Our proof relies on the temperedness criterion (Fact \ref{fact:BK}), which was established in \cite{BK-I,BK-II} by an analytic and dynamical approach in the general case. The criterion can be reduced to finitely many inequalities 
arising from the edges of convex polyhedral cones, 
actually $2^n$ inequalities in our setting. 
To solve Problem \ref{q:main}, we still need to analyze the $2^n$ inequalities. A number of combinatorial techniques were proposed in \cite{BK-II,BK-III}, and among them, Theorem \ref{thm:tempGH} was proved in the special setting where $\mathfrak{h}$ is a subalgebra of three-by-three block matrix form (\cite[Cor.\ 5.8]{BK-II}). 
The new technical ingredients in this article include yet another combinatorial trick which reduces $2^n$ inequalities to very simple $n$ inequalities (the third condition in Theorem \ref{thm:tempGH}). 
The same technique also yields an $L^p$-estimate of the matrix coefficients of the regular representation $L^2(G/H)$ when $H$ is reductive, see Theorem \ref{prop:Lp}. 

This article is organized as follows.
In Section \ref{sec:pre}, we review the Herz majoration principle and the temperedness criterion in a general setting. Section \ref{sec:pfmain} provides 
a proof of Theorems \ref{thm:tensor} and \ref{thm:tempGH}, postponing a combinatorial proof of Lemma \ref{lem:iImax} until Section \ref{sec:comb}. In Section \ref{sec:Append}, we discuss Problem \ref{q:main} for any simple groups under the assumption that $Q$ is the opposite parabolic subgroup of $P$.

\section{Preliminaries}
\label{sec:pre}

In this section we fix some notations and recall the previous results on unitary representations that will be needed later.

\subsection{Regular representations}
\label{subsec:RegGH}

For an $m$-dimensional manifold $X$, we denote by $\mathcal{L}_{vol}\equiv\mathcal{L}_{X,vol}:=\left|\bigwedge^m(T^*X)\right|$ the density bundle of X, and by $L^2(X)$ the Hilbert space of square integrable sections for the half-density bundle $\mathcal{L}_{vol}^{1/2}$.
Suppose a Lie group $G$ acts continuously on $X$. Then $G$ acts equivariantly on the half-density bundle $\mathcal{L}_{vol}^{1/2}$, and one has naturally a unitary representation $\lambda_X$ of $G$ on $L^2(X)$, referred to as the {\it regular representation}. Associated to a unitary representation $(\sigma,W)$ of a closed subgroup $H$ of $G$, the unitary induction $\operatorname{Ind}^G_H(\sigma)$ is defined as a unitary representation of $G$ on the Hilbert space of square integrable sections for the $G$-equivariant Hilbert bundle $(G\times_HW)\otimes\mathcal{L}_{vol}^{1/2}$ over the homogeneous space $G/H$. By definition, $\operatorname{Ind}^G_H(\mathbf{1})$ is the regular representation $\lambda_{G/H}$ on $L^2(G/H)$, where $\mathbf{1}$ denotes the trivial one-dimensional representation of $H$.

\subsection{Tempered representations}
\label{subsec:temp}

Let $(\pi,\mathcal{H})$ and $(\pi',\mathcal{H}')$ be unitary representations of a locally compact group $G$. We say $\pi$ is {\it weakly contained} in $\pi'$, to be denoted by $\pi\prec\pi'$ if for every $v\in\mathcal{H}$ the matrix coefficient $(\pi(g)v,v)$ can be approximated uniformly on compact subsets of $G$ by a sequence of finite sums of functions $(\pi'(g)u_j,u_j)$ with $u_1,\cdots,u_k\in\mathcal{H}'$. 

\begin{defin}
\label{def:temp}
A unitary representation $\pi$ of $G$ is called {\it tempered} if $\pi$ is weakly contained in the (left) regular representation $\lambda_G$ on $L^2(G)$. 
\end{defin}

When $G$ is a semisimple Lie group, $\pi$ is tempered if and only if $\pi$ is almost $L^2$, see \cite{CHH}. Here we recall:
\begin{defin}
\label{def:almostLp}
Let $p\geq1$. A unitary representation $(\pi,\mathcal{H})$ of $G$ is said to be {\it almost} $L^p$ if there exists a dense subset $D$ of $\mathcal{H}$ for which the matrix coefficients $g\mapsto(\pi(g)u,v)$ are in $L^{p+\varepsilon}(G)$ for all $\varepsilon>0$ and all $u,v\in D$. 
\end{defin}

\begin{remark}
\label{rem:tensor}
Temperedness is closed under induction and restrictions of unitary representations.  
Moreover, if $\pi$ is tempered, then the tensor product representation $\pi\otimes\sigma$ is tempered for any unitary representation $\sigma$ of $G$. In fact, if $\pi\prec\lambda_G$, then $\pi\otimes\sigma\prec\lambda_G\otimes\sigma$. Since $\lambda_G\otimes\sigma$ is a multiple of $\lambda_G$ (\cite[Cor.\ E.2.6]{BHV}), one concludes $\pi\otimes\sigma\prec\lambda_G$.
\end{remark}

We recall a classical lemma called ``Herz majoration principle'', see \cite[Sect.\ 6]{BG}:

\begin{lemma}[{\cite[Lem.\ 3.2]{BK-II}}]
\label{lem:Herz}
Let $G$ be a semisimple Lie group with finitely many connected components such that the identity component has finite center, and $H$ a closed subgroup of $G$. If the regular representation $\lambda_{G/H}$ is tempered, then the induced representation $\operatorname{Ind}^G_H(\sigma)$ is tempered for any unitary representation $\sigma$ of $H$. 
\end{lemma}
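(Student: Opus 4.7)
The plan is to reduce the temperedness of $\operatorname{Ind}^G_H(\sigma)$ to that of $\lambda_{G/H}$ via a pointwise domination of matrix coefficients (the classical Herz idea), and then to invoke the almost-$L^2$ characterization of temperedness for semisimple $G$ (the Cowling--Haagerup--Howe theorem cited in \cite{CHH}).

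First, I would set up notation following Subsection \ref{subsec:RegGH}: realize $\operatorname{Ind}^G_H(\sigma)$ on square-integrable sections of the Hilbert bundle $(G\times_H W)\otimes \mathcal{L}^{1/2}_{G/H,vol}$, where $(\sigma, W)$ is the given unitary representation of $H$. For two such sections $\phi,\psi$, the matrix coefficient $\langle \operatorname{Ind}^G_H(\sigma)(g)\phi,\psi\rangle$ is an integral over $G/H$ of the $W$-valued fiberwise inner product $\langle \phi(g^{-1}x),\psi(x)\rangle_W$, twisted by the half-density structure. The pointwise norms $|\phi|,|\psi|$ are then sections of $\mathcal{L}^{1/2}_{G/H,vol}$, hence elements of $L^2(G/H)$, with $\||\phi|\|_{L^2(G/H)}=\|\phi\|$ and likewise for $\psi$.

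Next comes the key estimate, obtained by applying Cauchy--Schwarz inside the fiber $W$ pointwise on $G/H$:
\[
|\langle \operatorname{Ind}^G_H(\sigma)(g)\phi,\psi\rangle| \;\leq\; \langle \lambda_{G/H}(g)|\phi|,|\psi|\rangle.
\]
Given this pointwise domination, the hypothesis that $\lambda_{G/H}$ is tempered, together with \cite{CHH}, supplies a dense subspace $D\subset L^2(G/H)$ on which all matrix coefficients lie in $L^{2+\varepsilon}(G)$ for every $\varepsilon>0$. I would then exhibit a dense subspace $\widetilde D$ of the induced representation whose pointwise-norm functions belong to $D$ (for instance sections taking values in a finite-dimensional subspace of $W$ with suitably regular support); the displayed inequality immediately transfers the $L^{2+\varepsilon}$-estimate from $D$ to $\widetilde D$, showing that $\operatorname{Ind}^G_H(\sigma)$ is almost $L^2$. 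A second application of \cite{CHH} then concludes temperedness.

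The main obstacle is the careful bookkeeping with the half-density bundle $\mathcal{L}^{1/2}_{G/H,vol}$, which is what lets the Cauchy--Schwarz step produce an element of $L^2(G/H)$ in a $G$-equivariant way, so that the right-hand side of the displayed inequality is literally a matrix coefficient of $\lambda_{G/H}$. When $G/H$ carries no $G$-invariant measure, the half-density formalism absorbs the missing modular factor and keeps both sides compatible under translation. Once this is in place, both the Cauchy--Schwarz step and the construction of $\widetilde D$ are routine.
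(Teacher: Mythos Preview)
The paper does not give its own proof of this lemma; it is quoted as \cite[Lem.~3.2]{BK-II}, with the underlying Herz majoration referred to \cite[Sect.~6]{BG}. Your outline---fiberwise Cauchy--Schwarz to obtain the domination $|\langle\operatorname{Ind}^G_H(\sigma)(g)\phi,\psi\rangle|\le\langle\lambda_{G/H}(g)|\phi|,|\psi|\rangle$, followed by the almost-$L^2$ criterion of \cite{CHH}---is exactly the standard argument carried out in those references, and is correct; the only point requiring care (which you flag) is that $\phi\mapsto|\phi|$ need not send $K$-finite vectors to $K$-finite vectors, but this is handled by dominating $|\phi|$ pointwise by a $K$-invariant (hence $K$-finite) majorant so that the CHH bound by $\Xi$ applies.
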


\subsection{Temperedness criterion for $L^2(G/H)$}
\label{subsec:BK}
A Lie algebra is said to be {\it algebraic} if it is isomorphic to the Lie algebra of an affine algebraic group, or equivalently, the image of the adjoint representation $\operatorname{ad}\colon\mathfrak{h}\to\operatorname{End}(\mathfrak{h})$ is the Lie algebra of an algebraic subgroup of $\operatorname{Aut}(\mathfrak{h})$, see \cite{Gt}.
A subalgebra $\mathfrak{a}$ is said to be {\it split} if $\operatorname{ad}(H)\in\operatorname{End}(\mathfrak{h})$ is diagonalizable over $\mathbb{R}$ for every $H\in\mathfrak{a}$.
Let $\mathfrak{a}$ be a maximal split abelian  subalgebra in an (algebraic) Lie algebra $\mathfrak{h}$. 
Such $\mathfrak{a}$ is unique up to conjugation, and we denote by $\operatorname{rank}_\mathbb{R}\mathfrak{h}$ its dimension when $\mathfrak{h}$ is a semisimple Lie algebra. 

Let $V$ be a finite-dimensional representation of $\mathfrak{h}$. 
Following \cite{BK-I,BK-II}, we define a non-negative function $\rho_V$ on $\mathfrak{a}$ by 
\[
\rho_V(Y):=\frac{1}{2}\sum_{\lambda\in\Delta(V,\mathfrak{a})}m_\lambda|\lambda(Y)|\quad\text{for }Y\in\mathfrak{a},
\]
where $\Delta(V,\mathfrak{a})$ is the set of weights of $\mathfrak{a}$ in $V$ and $m_\lambda$ denotes the dimension of the corresponding weight space $V_\lambda$. The function $\rho_V$ is continuous and is piecewise linear i.e.\ there exist finitely many convex polyhedral cones which covers $\mathfrak{a}$ and on which $\rho_V$ is linear, see \cite[Sect.\ 4.7]{BK-I}. 
We set
\begin{align}
\label{eqn:BKconstant}
p_V:=\max_{Y\in\mathfrak{a}\setminus\{0\}}\frac{\rho_\mathfrak{h}(Y)}{\rho_V(Y)}. 
\end{align}

\begin{fact}
\label{fact:BK}
Let $G$ be a linear semisimple Lie group, and $H$ an algebraic subgroup.
\begin{itemize}
\item[(1)](\cite[Thm.\ 2.9]{BK-II})
One has the equivalence:
\[
L^2(G/H)\text{ is tempered }\iff2\rho_\mathfrak{h}\leq\rho_\mathfrak{g}\text{ on }\mathfrak{a}. 
\]
\item[(2)](\cite[Thm.\ 4.1]{BK-I})
Let $p$ be a positive even integer. If $H$ is reductive, one has the equivalence:
\[
L^2(G/H)\text{ is almost }L^p\iff p_{\mathfrak{g}/\mathfrak{h}}\leq p-1.
\]
\end{itemize}
\end{fact}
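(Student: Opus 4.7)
The strategy I would take is to reduce both equivalences to $L^{p+\varepsilon}$-integrability problems for matrix coefficients of $\lambda_{G/H}$ on $G$ via the Cowling--Haagerup--Howe criterion: for a linear semisimple Lie group with finite center, a unitary representation is tempered (respectively almost $L^p$) if and only if a dense family of its $K$-finite matrix coefficients belongs to $L^{2+\varepsilon}(G)$ (respectively $L^{p+\varepsilon}(G)$) for every $\varepsilon>0$. The plan is then to analyze these matrix coefficients ray-by-ray along the split torus $A=\exp\mathfrak{a}\subset H$ and balance against the exponential volume growth of Haar measure on $G$ in Cartan direction, obtaining combinatorial inequalities on $\mathfrak{a}$.

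The upper-bound (sufficiency) direction requires two analytic inputs. First, a pointwise decay estimate of Harish-Chandra type: for $\phi,\psi\in C_c^\infty(G/H)$ and $Y$ in a chamber of $\mathfrak{a}$, the matrix coefficient $|(\lambda_{G/H}(\exp tY)\phi,\psi)|$ decays exponentially as $t\to+\infty$ at a rate obtained by tracking the action of $\operatorname{Ad}(\exp tY)$ on the $\mathfrak{a}$-weight decomposition of $\mathfrak{g}$; geometrically, this captures the contraction/dilation of infinitesimal $H$-orbits through the support of $\phi$ inside $G/H$, and is controlled by $\rho_\mathfrak{h}$ (together with $\rho_V$ for the quantitative statement). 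Second, the Cartan-decomposition integration formula $G=K(\exp\overline{\mathfrak{a}_\mathfrak{g}^+})K$ with Jacobian asymptotically $e^{2\rho_\mathfrak{g}(Y)}$ deep in a chamber. Matching the exponential decay of $|M(g)|^{p+\varepsilon}$ against the exponential Haar growth, ray by ray in $\mathfrak{a}$, reduces the $L^{p+\varepsilon}(G)$-integrability to the pointwise inequality $2\rho_\mathfrak{h}(Y)\leq\rho_\mathfrak{g}(Y)$ for part (1), and to $p_{\mathfrak{g}/\mathfrak{h}}\leq p-1$ for part (2) after using the decomposition $\rho_\mathfrak{g}=\rho_\mathfrak{h}+\rho_V$ (which holds when $H$ is reductive and an $\mathfrak{a}$-stable complement $V$ exists).

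For the converse direction, I would produce, whenever the target inequality fails at some $Y_0\in\mathfrak{a}$, an explicit sequence of matrix coefficients whose decay along $\mathbb{R}_{\geq 0}Y_0$ saturates the threshold and thereby obstructs $L^{p+\varepsilon}$-integrability on a full open cone, using the piecewise linearity and positive homogeneity of $\rho_\mathfrak{h}, \rho_\mathfrak{g}, \rho_V$ to propagate the obstruction from a single ray. When $H$ is reductive this is carried out via spherical function asymptotics and the Harish-Chandra Plancherel formula on $G/H$; in the general non-reductive setting of part (1), one must instead invoke dynamical or mixing-type arguments on the homogeneous space to realize the borderline rate by genuine matrix coefficients.

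The main obstacle is precisely this converse direction in the non-reductive case: upper-bound estimates by themselves do not guarantee that the critical decay rate is attained, so substantial additional input (approximate $A$-eigenvectors, ergodic-theoretic constructions) is required to build matrix coefficients witnessing the failure of integrability. A secondary technical difficulty is that $\rho_\mathfrak{h}, \rho_\mathfrak{g}$, and $\rho_V$ are only piecewise linear, so the "for every $Y$" criterion must be repackaged into finitely many linear inequalities indexed by the edges of a convex polyhedral decomposition of $\mathfrak{a}$---the combinatorial framework that organizes the analysis in the later sections of the present paper.
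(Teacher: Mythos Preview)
The paper does not prove this statement at all: it is recorded as a \emph{Fact} and attributed directly to \cite[Thm.~2.9]{BK-II} and \cite[Thm.~4.1]{BK-I}, with no argument given in the present text. Your proposal is therefore not comparable to anything in this paper; you have sketched a plausible road map for how the results of those earlier papers are established (Cowling--Haagerup--Howe, Harish-Chandra type decay estimates balanced against Cartan--$KAK$ volume growth, and a dynamical construction of witness vectors in the non-reductive case), which is broadly in the spirit of \cite{BK-I,BK-II}, but the current paper simply imports the conclusion as a black box and moves on to the combinatorial reduction in Section~\ref{sec:pfmain}.
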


The inequality in Fact \ref{fact:BK} can be checked only at finitely many points in $\mathfrak{a}$, 
namely, at the generators of the edges of the convex polyhedral cones, 
as we shall see in Lemma \ref{lem:edge} below in the setting we need. 

\section{Proof of Theorems \ref{thm:tensor} and \ref{thm:tempGH}}
\label{sec:pfmain}

In this section, we show the main results by using the temperedness criterion (Fact \ref{fact:BK}) and some combinatorial lemmas. We postpone the proof of Lemma \ref{lem:iImax} until Section \ref{sec:comb}. 

Suppose $\mathfrak{g}=\mathfrak{gl}_n(\mathbb{R})$ and $\mathfrak{h}$ is an $\mathfrak{a}$-invariant subalgebra as in the setting of Theorem \ref{thm:tempGH}. 
Since split Cartan subalgebras $\mathfrak{a}$ are conjugate to each other by inner automorphisms, we may and do assume $\mathfrak{a}=\bigoplus_{i=1}^n\mathbb{R} E_{ii}$, where $E_{ij}$ denotes the matrix unit.

For $1\leq i,j\leq n$, we set
\begin{align}
\label{eqn:epsilonij}
\varepsilon_{ij}\equiv\varepsilon_{ij}(\mathfrak{h}):=\dim_\mathbb{R}(\mathfrak{h}\cap\mathbb{R}E_{ij})\in\{0,1\}. 
\end{align}

By the weight decomposition of $\mathfrak{h}$ with respect to $\mathfrak{a}$, one sees
\begin{align}
\label{eqn:rhi}
\dim\operatorname{Image}(\operatorname{ad}(E_{ii})\colon\mathfrak{h}\to\mathfrak{h})=\sum_{j\in\{1,\ldots,n\}\setminus\{i\}}(\varepsilon_{ij}+\varepsilon_{ji})=2\rho_\mathfrak{h}(E_{ii}). 
\end{align}

Since $\rho_\mathfrak{g}(E_{ii})=n-1$, the condition (iii) in Theorem \ref{thm:tempGH} amounts to
\[
2\rho_\mathfrak{h}(E_{ii})\leq \rho_\mathfrak{g}(E_{ii})\quad\text{for all }i\ (1\leq i\leq n). 
\]

\subsection{Reduction to finite inequalities}
\label{subsec:comb}

The temperedness criterion (Fact \ref{fact:BK}) is given by the inequality on $\mathfrak{a}$, which reduces to a finite number of inequalities on the generators of convex polyhedral cones. This is Lemma \ref{lem:edge} below which reduces to $2^n$ inequalities. A further combinatorial argument reduces to $n$ inequalities (Lemma \ref{lem:iImax}).

For a non-empty subset $I\subset\{1,\ldots,n\}$, we set $E_I:=\sum_{i\in I}E_{ii}$. 
Then $E_I=E_{ii}$ if $I=\{i\}$; $E_I$ generates the center $\mathfrak{z}(\mathfrak{g})$ of $\mathfrak{g}$ if $I=\{1,2,\ldots,n\}$.

\begin{lemma}
\label{lem:edge}
The condition (ii) in Theorem \ref{thm:tempGH} is equivalent to
\begin{align}
\label{eqn:EIrho}
2\rho_\mathfrak{h}(E_I)\leq\rho_\mathfrak{g}(E_I)\quad\text{for all } I\subset\{1,\ldots,n\}. 
\end{align}
\end{lemma}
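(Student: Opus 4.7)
The plan is to combine Fact~\ref{fact:BK}(1) with a piecewise-linearity argument. Fact~\ref{fact:BK}(1) already identifies condition (ii) with the pointwise inequality $2\rho_\mathfrak{h}(Y)\leq\rho_\mathfrak{g}(Y)$ for \emph{all} $Y\in\mathfrak{a}$, so specializing to $Y=E_I$ gives the ``only if'' direction immediately. The real content is the ``if'' direction: I would show that verifying the inequality at the $2^n$ points $E_I$ is sufficient to control it on all of $\mathfrak{a}$.

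For this, I would exploit the piecewise-linear structure of the function $F:=\rho_\mathfrak{g}-2\rho_\mathfrak{h}$. Because every weight of $\mathfrak{a}$ on $\mathfrak{h}$ or on $\mathfrak{g}$ is of the form $\pm(e_i-e_j)$ (or zero), the sign change of $|\lambda(Y)|$ occurs only across the hyperplanes $\{Y_i=Y_j\}$. Hence $F$ is linear on every chamber of the type $A_{n-1}$ arrangement, i.e.\ on each closed cone
\[
C_\sigma:=\{Y\in\mathfrak{a}\mid Y_{\sigma(1)}\geq Y_{\sigma(2)}\geq\cdots\geq Y_{\sigma(n)}\}\quad(\sigma\in S_n),
\]
and $\mathfrak{a}=\bigcup_{\sigma\in S_n}C_\sigma$, so it is enough to check $F\geq 0$ on each $C_\sigma$.

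The next step is to identify the extreme rays of $C_\sigma$. Setting $a_k:=Y_{\sigma(k)}-Y_{\sigma(k+1)}$ for $1\leq k\leq n-1$ and $a_n:=Y_{\sigma(n)}$, one rewrites
\[
Y=\sum_{k=1}^{n-1}a_k\,E_{I_k^\sigma}+a_n\,E_{\{1,\ldots,n\}},\qquad I_k^\sigma:=\{\sigma(1),\ldots,\sigma(k)\},
\]
with $a_1,\ldots,a_{n-1}\geq 0$ and $a_n\in\mathbb{R}$. By linearity of $F$ on $C_\sigma$,
\[
F(Y)=\sum_{k=1}^{n-1}a_k\,F(E_{I_k^\sigma})+a_n\,F(E_{\{1,\ldots,n\}}).
\]
Since $E_{\{1,\ldots,n\}}$ lies in the center $\mathfrak{z}(\mathfrak{g})$, every weight of $\mathfrak{a}$ vanishes on it, so $F(E_{\{1,\ldots,n\}})=0$; the sign of the coefficient $a_n$ is then irrelevant. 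Under the hypothesis \eqref{eqn:EIrho}, each $F(E_{I_k^\sigma})\geq 0$, so $F(Y)\geq 0$ on $C_\sigma$. Letting $\sigma$ range over $S_n$ covers $\mathfrak{a}$, which completes the argument.

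The only mild obstacle I anticipate is bookkeeping the non-pointedness of $C_\sigma$: the chamber carries the lineality direction $\mathbb{R} E_{\{1,\ldots,n\}}$, so one has to observe separately that $F$ vanishes on this lineality (which it does for the trivial centrality reason above), rather than applying the ``check at extreme rays'' principle for pointed cones directly. Once this is handled, the reduction to the $2^n$ inequalities indexed by nonempty $I\subseteq\{1,\ldots,n\}$ is transparent.
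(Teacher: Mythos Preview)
Your proposal is correct and follows essentially the same route as the paper: both invoke Fact~\ref{fact:BK} and then use the linearity of $\rho_\mathfrak{g}$ and $\rho_\mathfrak{h}$ on each Weyl chamber to reduce the pointwise inequality on $\mathfrak{a}$ to the extreme rays $E_I$. The only cosmetic difference is that the paper argues with the ratio $\rho_\mathfrak{h}/\rho_\mathfrak{g}$ (a linear-fractional function on segments, which incidentally yields the identity~\eqref{eqn:edge} reused in Proposition~\ref{prop:Lp}), whereas you work directly with the linear difference $F=\rho_\mathfrak{g}-2\rho_\mathfrak{h}$.
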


\begin{proof}
By the temperedness criterion (Fact \ref{fact:BK}), the condition (ii) in Theorem \ref{thm:tempGH} is given by $2\rho_\mathfrak{h}\leq\rho_\mathfrak{g}$ on $\mathfrak{a}/\mathfrak{z}(\mathfrak{g})$. 
Thus it suffices to show
\begin{align}
\label{eqn:edge}
\max_{0\neq Y\in\mathfrak{a}/\mathfrak{z}(\mathfrak{g})}\frac{\rho_\mathfrak{h}(Y)}{\rho_\mathfrak{g}(Y)}=\max_{I\subsetneq\{1,\ldots,n\}}\frac{\rho_\mathfrak{h}(E_I)}{\rho_\mathfrak{g}(E_I)}.
\end{align}
To see the non-trivial inequality $\leq$,
we begin with the dominant chamber $\overline{\mathfrak{a}_+}=\{\operatorname{diag}(x_1,\ldots,x_n)\colon x_1\geq\cdots\geq x_n\}$. Since both $\rho_\mathfrak{h}$ and $\rho_\mathfrak{g}$ are linear on $\overline{\mathfrak{a}_+}$, the restriction of the function $\rho_\mathfrak{h}/\rho_\mathfrak{g}$ to the line segment $tY+(1-t)Z\ (Y,Z\in\overline{\mathfrak{a}_+}\setminus\mathfrak{z}(\mathfrak{g}))$ is a linear fractional function of $t\ (0\leq t\leq 1)$, which attains its maximum either at $t=0$ or $t=1$. 
An iteration of the argument tells that the maximum of $\rho_\mathfrak{h}/\rho_{\mathfrak{g}}$ on $(\overline{\mathfrak{a}_+}/\mathfrak{z}(\mathfrak{g}))\setminus\{0\}$ is attained at one of the edges of the convex polyhedral cone $\overline{\mathfrak{a}_+}/\mathfrak{z}(\mathfrak{g})$, namely, at $\mathbb{R}_+E_I$ for some $I=\{1,2,\ldots,k\}$ with $1\leq k\leq n-1$. 

Similar argument applies to the other Weyl chambers. 
\end{proof}

The following lemma tells that it suffices to use $E_I$ with $\# I=1$ for ``witness vectors'' (\cite{BK-III}) in our setting, 
and will be proved in Section \ref{sec:comb}. 
\begin{lemma}
\label{lem:iImax}
If $2\rho_\mathfrak{h}(E_{ii})\leq\rho_\mathfrak{g}(E_{ii})$ for all $i$ ($1\leq i\leq n$), then (\ref{eqn:EIrho}) holds. 
\end{lemma}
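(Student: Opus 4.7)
The plan is to make both sides of \eqref{eqn:EIrho} completely explicit via the $\mathfrak{a}$-weight decomposition, then exploit the $I\leftrightarrow I^c$ symmetry to reduce to small $|I|$, and finally dispatch that range by summing the singleton hypothesis.

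Concretely, I would first note that the nonzero $\mathfrak{a}$-weights on $\mathfrak{g}=\mathfrak{gl}_n$ are $e_i-e_j$ ($i\neq j$) each of multiplicity one, with $(e_i-e_j)(E_I)=\mathbf{1}_I(i)-\mathbf{1}_I(j)$; counting ordered pairs with exactly one index in $I$ gives $\rho_\mathfrak{g}(E_I)=|I|(n-|I|)$. By $\mathfrak{a}$-stability, the weights of $\mathfrak{h}$ are the same $e_i-e_j$ with multiplicities $\varepsilon_{ij}\in\{0,1\}$, so the same bookkeeping yields
\[
2\rho_\mathfrak{h}(E_I)=\sum_{i\in I,\ j\notin I}(\varepsilon_{ij}+\varepsilon_{ji}).
\]
Both expressions are invariant under $I\leftrightarrow I^c$ (either directly from the formulas, or more conceptually because $E_I+E_{I^c}=\mathrm{Id}$ lies in the center of $\mathfrak{g}$ and $\rho_V$ annihilates central elements while being even). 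Hence I may assume $k:=|I|\leq n/2$.

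For such $I$, I would extend the inner index from $j\notin I$ to $j\neq i$, which only adds non-negative terms, and use \eqref{eqn:rhi} to get
\[
2\rho_\mathfrak{h}(E_I)\ \leq\ \sum_{i\in I}\sum_{j\neq i}(\varepsilon_{ij}+\varepsilon_{ji})\ =\ \sum_{i\in I}2\rho_\mathfrak{h}(E_{ii})\ \leq\ k(n-1),
\]
the last inequality being the hypothesis $2\rho_\mathfrak{h}(E_{ii})\leq\rho_\mathfrak{g}(E_{ii})=n-1$. Finally, $k\leq n/2$ implies $n-1\leq 2(n-k)$, so $k(n-1)\leq 2k(n-k)=\rho_\mathfrak{g}(E_I)$, giving \eqref{eqn:EIrho}.

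I do not see a genuinely hard step in this argument; it is really a one-line observation packaged with a complementation trick. The crude upper bound obtained by summing the singleton inequalities over $i\in I$ produces $|I|(n-1)$, which is competitive with the target $|I|(n-|I|)$ precisely when $|I|\leq(n+1)/2$, and the symmetry $\rho_\mathfrak{h}(E_I)=\rho_\mathfrak{h}(E_{I^c})$ handles the rest of the range at no extra cost. The only point deserving a moment of care is the combinatorial bookkeeping in the weight formula for $\rho_\mathfrak{h}(E_I)$—summing both $\varepsilon_{ij}$ and $\varepsilon_{ji}$, and being sure not to double-count ordered pairs when splitting the sum over $i\in I$ and $j\notin I$.
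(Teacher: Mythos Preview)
Your final step contains a factor-of-two slip that is fatal, not cosmetic. You correctly computed $\rho_\mathfrak{g}(E_I)=|I|(n-|I|)=k(n-k)$ at the outset, but in the last line you write ``$k(n-1)\leq 2k(n-k)=\rho_\mathfrak{g}(E_I)$''. The right-hand side is $k(n-k)$, not $2k(n-k)$, so what you actually need is $k(n-1)\leq k(n-k)$, i.e.\ $k\leq 1$. Your crude bound from summing the singleton hypotheses is therefore only good enough for $|I|=1$.

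This cannot be repaired by tweaking constants: take $\mathfrak{h}$ to be the Borel subalgebra of $\mathfrak{gl}_n$. Then $\varepsilon_{ij}+\varepsilon_{ji}=1$ for every $i\neq j$, so $2\rho_\mathfrak{h}(E_{ii})=n-1$ for all $i$ and the hypothesis holds with equality. For $I=\{1,\dots,k\}$ one gets $2\rho_\mathfrak{h}(E_I)=k(n-k)=\rho_\mathfrak{g}(E_I)$, so \eqref{eqn:EIrho} is an equality, while your upper bound $\sum_{i\in I}2\rho_\mathfrak{h}(E_{ii})=k(n-1)$ strictly exceeds it whenever $k\geq 2$. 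The overshoot is exactly $2\sum_{i<j,\;i,j\in I}(\varepsilon_{ij}+\varepsilon_{ji})$, the within-$I$ interactions you threw away when enlarging the inner sum from $j\notin I$ to $j\neq i$; controlling that term is the real content of the lemma.

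The paper does not proceed by complementation and crude summing. It passes to the block decomposition coming from a maximal reductive subalgebra $\mathfrak{l}=\bigoplus_k\mathfrak{gl}(V_k)$ of $\mathfrak{h}$, rewrites $p\rho_\mathfrak{h}(E_I)-(p-1)\rho_\mathfrak{g}(E_I)$ (here $p=2$) as a quadratic form $\sum_{k,\ell}a_{k\ell}m_k(n_\ell-m_\ell)$ in the block-intersection numbers $m_k=\#\{i\in I:e_i\in V_k\}$, with $a_{kk}=1$ and $a_{k\ell}\leq 0$ for $k\neq\ell$, and then proves a purely combinatorial lemma (by induction on $\sum_k m_k$) that such a form is $\leq 0$ for all admissible $\mathbf{m}$ once it is $\leq 0$ on the unit vectors. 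The Lie-algebra structure enters precisely through the constraint $\varepsilon_{k\ell}+\varepsilon_{\ell k}\leq 1$ for distinct blocks, which forces the off-diagonal $a_{k\ell}$ to be non-positive; this is what your argument never uses.
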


\subsection{Proof of Theorem \ref{thm:tempGH}}
\label{subsec:pftempGH}

The equivalence (i) $\iff$ (ii) in Theorem \ref{thm:tempGH} follows from the Herz majoration principle (Lemma \ref{lem:Herz}). Let us verify the equivalence (ii) $\iff$ (iii). 
We may and do assume that $\mathfrak{h}$ contains $\mathfrak{a}=\sum_{i=1}^n\mathbb{R}E_{ii}$. In fact, if $\mathfrak{h}$ is stable by $\mathfrak{a}$, then $\tilde{\mathfrak{h}}:=\mathfrak{h}+\mathfrak{a}$ is a Lie subalgebra containing $\mathfrak{a}$. We write $\tilde{H}$ for the connected subgroup of $G$ with Lie algebra $\tilde{\mathfrak{h}}$. Then $L^2(G/H)$ is tempered if and only if $L^2(G/\tilde{H})$ is tempered by \cite[Cor.\ 3.3]{BK-II}. Moreover, $\operatorname{Image}(\operatorname{ad}(E_{ii})\colon\mathfrak{h}\to\mathfrak{h})$ remains the same if we replace $\mathfrak{h}$ with $\tilde{\mathfrak{h}}$, hence the conditions (ii) and (iii) in Theorem \ref{thm:tempGH} are unchanged. 
Now one has the equivalences: 
\begin{alignat*}{4}
\text{(ii)}\iff&2\rho_\mathfrak{h}(Y)&&\leq\rho_\mathfrak{g}(Y)&\quad&(^\forall Y\in\mathfrak{a}) &\quad&\text{by Fact \ref{fact:BK}}\\
\iff&2\rho_\mathfrak{h}(E_I)&&\leq\rho_\mathfrak{g}(E_I)&\quad&(^\forall I\subset\{1,\ldots,n\})&\quad&\text{by Lemma \ref{lem:edge}}\\
\iff&2\rho_\mathfrak{h}(E_{ii})&&\leq\rho_\mathfrak{g}(E_{ii})&\quad&(1\leq{}^\forall i\leq n)&\quad&\text{by Lemma \ref{lem:iImax}},
\end{alignat*}
which is equivalent to (iii). Thus Theorem \ref{thm:tempGH} is proved.

\subsection{Proof of Theorem \ref{thm:tensor}}
\label{subsec:pftensor}

Without loss of generality, we may and do assume that $P$ and $Q$ are standard parabolic subgroups with Levi subgroups $GL_{n_1}\times\cdots\times GL_{n_r}$ and $GL_{m_1}\times\cdots\times GL_{m_s}$, respectively. 
Let $w:=\sum_{i=1}^nE_{i\ n+1-i}\in G$, a representative of the longest element of the Weyl group $W(\mathfrak{g},\mathfrak{a})$. 
Then $Q^o:=w^{-1}Qw$ is a parabolic subgroup of $G$ with Levi subgroup $GL_{m_s}\times\cdots\times GL_{m_1}$, and $PQ^o$ is open dense in $G$, hence the diagonal map $G\to G\times G,\ g\mapsto (g,g)$ induces an open dense embedding $\iota\colon G/H\hookrightarrow G/P\times G/Q^o$, where $H:=P\cap Q^o$. Thus the tensor product representation $\operatorname{Ind}^G_P(\mathbf{1})\otimes\operatorname{Ind}^G_Q(\mathbf{1})\simeq\operatorname{Ind}^G_P(\mathbf{1})\otimes\operatorname{Ind}^G_{Q^o}(\mathbf{1})$ is unitarily equivalent to $L^2(G/H)$
via the $G$-isomorphism of the equivariant line bundles $\iota^*(\mathcal{L}_{G/P,vol}\otimes\mathcal{L}_{G/Q^o,vol})\simeq\mathcal{L}_{G/H,vol}$.

We define integers $N(a)\ (0\leq a\leq r)$ and $M(b)\ (0\leq b\leq s)$ by
\[
N(a):=\sum_{j=1}^an_j\quad(1\leq a\leq r),\ M(b):=\sum_{j=1}^bm_{s+1-j}\quad(1\leq b\leq s),
\]
and set $N(0)=M(0)=0$. 
We note $N(r)=M(s)=n$. By definition, for each $1\leq i\leq n$, there exist uniquely $a(i)\in\{1,\ldots,r\}$ and $b(i)\in\{1,\ldots,s\}$ such that \
\begin{align}
\label{eqn:NMi}
N(a(i)-1)<i\leq N(a(i))\text{ and }M(b(i)-1)<i\leq M(b(i)).
\end{align} 
By definition, one has for $1\leq i,j\leq n$,
\begin{align*}
E_{ij}\in\mathfrak{p}&\iff N(a(i)-1)<j,&\ E_{ij}\in\mathfrak{q}^o&\iff j\leq M(b(i)),\\
E_{ji}\in\mathfrak{p}&\iff j\leq N(a(i)),&\ E_{ji}\in\mathfrak{q}^o&\iff M(b(i)-1)<j.
\end{align*}
Since the Lie algebra $\mathfrak{h}$ of $H$ is equal to $\mathfrak{p}\cap\mathfrak{q}^o$, (\ref{eqn:rhi}) shows
\begin{align*}
2\rho_\mathfrak{h}(E_{ii})=&(M(b(i))-N(a(i)-1)-1)+(N(a(i))-M(b(i)-1)-1)\\
=&n_{a(i)}+m_{s-b(i)+1}-2.
\end{align*}
Since $\mathfrak{h}$ contains $\mathfrak{a}$, we can apply Theorem \ref{thm:tempGH}, and conclude that $L^2(G/H)$ is tempered if and only if 
\begin{align}
\label{eqn:nmmax}
n_{a(i)}+m_{s-b(i)+1}\leq n+1\quad\text{for all }i\ (1\leq i\leq n). 
\end{align}
We claim (\ref{eqn:nmmax}) holds if and only if 
\begin{align}
\label{eqn:dPQ}
d(P)+d(Q)\leq n+1. 
\end{align}
The implication (\ref{eqn:dPQ}) $\Rightarrow$ (\ref{eqn:nmmax}) is obvious. 
To see the converse implication, we take $a\in\{1,\ldots,r\}$ and $b\in\{1,\ldots,s\}$ such that $n_a=d(P)$ and $m_{s+1-b}=d(Q)$. Then the subsets $\{N(a-1)+1,\ldots,N(a)\}$ and $\{M(b-1)+1,\ldots,M(b)\}$ of $\{1,2,\ldots,n\}$ have $d(P)$ and $d(Q)$ elements, respectively. If (\ref{eqn:dPQ}) fails, then one finds a common element, say $i$. By (\ref{eqn:NMi}), $a=a(i)$ and $b=b(i)$, hence (\ref{eqn:nmmax}) fails. Thus Theorem \ref{thm:tensor} is proved.

\section{Proof of Lemma \ref{lem:iImax}}
\label{sec:comb}

In this section, we show Lemma \ref{lem:iImax}, hence complete the proof of Theorems \ref{thm:tensor} and \ref{thm:tempGH}. 
Actually, we prove a generalization of Lemma \ref{lem:iImax} (see Lemma \ref{lem:Bp} below) which will be used also in an $L^p$ estimate of matrix coefficients (Theorem \ref{prop:Lp}).

\subsection{Reduction to quadratic inequalities}
We recall that $\mathfrak{h}$ is a Lie subalgebra of $\mathfrak{g}=\mathfrak{gl}_n(\mathbb{R})$ containing the Lie algebra $\mathfrak{a}$ of diagonal matrices. We also recall the notation $E_I=\sum_{i\in I}E_{ii}\in\mathfrak{a}$ for a subset $I$ of $\{1,\ldots,n\}$. We prove the following. 

\begin{lemma}\label{lem:Bp}
Suppose $p$ is an even integer $\geq2$. Then the inequality
\[
p\rho_\mathfrak{h}(E_I)\leq (p-1)\rho_\mathfrak{g}(E_I)
\]
is true for all subsets $I$ as soon as it is true when $I$ is a singleton.
\end{lemma}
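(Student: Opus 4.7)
The plan is to exploit the Lie-subalgebra condition, which forces the set $V_+ = \{(i,j): \varepsilon_{ij}=1,\ i \neq j\}$ to be transitive (since $[E_{ij}, E_{jk}] = E_{ik}$ for distinct $i,j,k$), so as to reformulate the inequality in terms of the resulting preorder on $\{1,\ldots,n\}$.

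First, I would decompose $\{1,\ldots,n\}$ into equivalence classes $C_1,\ldots,C_r$ (with $i \sim j$ iff $\varepsilon_{ij} = \varepsilon_{ji} = 1$), inheriting a partial order $C_\alpha \leq C_\beta$ iff $\varepsilon_{ij} = 1$ for some (equivalently, every) $i \in C_\alpha$, $j \in C_\beta$. Setting $c_\alpha = |C_\alpha|$, $m_\alpha = |I \cap C_\alpha|$, $\bar m_\alpha = c_\alpha - m_\alpha$, and counting cross pairs of the three possible types (same class, different-but-comparable, incomparable), a direct computation gives
$$2\rho_\mathfrak{h}(E_I) = 2A + C, \qquad \rho_\mathfrak{g}(E_I) = A + C + D,$$
where $A = \sum_\alpha m_\alpha \bar m_\alpha$, $C = \sum_{\{\alpha,\beta\}\text{ comparable},\,\alpha\neq\beta}(m_\alpha \bar m_\beta + m_\beta \bar m_\alpha)$, and $D$ is the analogous sum over incomparable pairs. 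The target inequality becomes the polynomial inequality
$$2A + (2-p)\,C \leq 2(p-1)\,D$$
in the class-intersection numbers $m_\alpha$.

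Next, I would translate the singleton hypothesis: since $\deg(i) = c_\alpha + N_\alpha - 2$ for $i \in C_\alpha$ (with $N_\alpha = \sum_{\beta\text{ comparable to }\alpha} c_\beta$, $\alpha$ counted as comparable to itself), the hypothesis reads $c_\alpha + N_\alpha \leq \frac{2[(p-1)n + 1]}{p}$, one constraint per class.

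The hard part will be deriving the polynomial inequality from these per-class constraints. My approach would be induction on $|I|$, pivoting on the identity
$$\phi_p(I) = \phi_p(I \setminus \{i\}) + \phi_p(\{i\}) + p\,d_{I\setminus\{i\}}(i) - 2(p-1)(|I|-1),$$
where $\phi_p(J) := (p-1)\rho_\mathfrak{g}(E_J) - p\rho_\mathfrak{h}(E_J)$ and $d_J(i) := \sum_{j \in J}(\varepsilon_{ij} + \varepsilon_{ji})$, obtained by direct expansion. The inductive step reduces to choosing $i \in I$ so that $p\, d_{I\setminus\{i\}}(i)$ absorbs the negative term $-2(p-1)(|I|-1)$; naive averaging over $i$ fails since $d_{I\setminus\{i\}}(i)$ can vanish (e.g.\ when $\mathfrak{h}=\mathfrak{a}$), so I expect the trick to use transitivity of $V_+$ essentially --- likely by assembling a weighted sum of the per-class singleton constraints indexed by the poset structure, rather than summing singleton inequalities over individual vertices.
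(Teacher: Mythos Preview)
Your setup is essentially the paper's: your equivalence classes $C_\alpha$ are exactly the Levi blocks $V_k$, your $c_\alpha, m_\alpha$ are the paper's $n_k, m_k$, and your reformulation as a quadratic in the $m_\alpha$ matches the paper's expression
\[
f(\mathbf m):=p\rho_\mathfrak{h}(E_I)-(p-1)\rho_\mathfrak{g}(E_I)=\sum_{k,\ell} a_{k\ell}\,m_k(n_\ell-m_\ell),
\]
with $a_{kk}=1$ and $a_{k\ell}=1+\tfrac{p}{2}(\varepsilon_{k\ell}+\varepsilon_{\ell k}-2)\in\{1-p,\,1-\tfrac{p}{2}\}$ for $k\neq\ell$. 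Your inductive identity is correct and is (up to sign) the paper's formula
\[
f(\mathbf m)=f(\mathbf m-\mathbf e_\ell)+f(\mathbf e_\ell)+2-2\sum_k a_{k\ell}m_k.
\]

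The genuine gap is in how you plan to close the induction. You are searching for a pivot $i$ (equivalently a class $\ell$) making the correction term nonnegative, and when that fails you propose to assemble weighted sums of singleton constraints along the poset. That is not the mechanism, and in fact the singleton hypothesis is \emph{not used at all} in the bad case. The paper's observation is a clean dichotomy on the integers $S_\ell:=\sum_k a_{k\ell}m_k$: either some $S_\ell\geq 1$, in which case removing one element from class $\ell$ gives $f(\mathbf m)\leq f(\mathbf m-\mathbf e_\ell)+f(\mathbf e_\ell)\leq 0$ by induction and the singleton hypothesis; or $S_\ell\leq 0$ for every $\ell$, in which case
\[
f(\mathbf m)=\sum_\ell (n_\ell-m_\ell)\,S_\ell\leq 0
\]
immediately, since $n_\ell-m_\ell\geq 0$. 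The evenness of $p$ enters precisely here: it makes each $a_{k\ell}$ an integer, so $S_\ell\in\mathbb Z$ and the two cases are exhaustive. Your $d_{I\setminus\{i\}}(i)$ depends only on the class of $i$, so you should run the induction at the class level; once you do, the ``no good pivot'' case is not an obstacle but the easy case.
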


\begin{remark}
An analogous statement to Lemma \ref{lem:Bp} fails for $p=3$, for instance, when $n=4$ and $\mathfrak{h}$ is a maximal parabolic subalgebra of dimension $12$. 
\end{remark}

Let $\{f_1,\ldots,f_n\}$ be the standard basis of $\mathbb{R}^n$, and  $W_j=\mathbb{R}f_j\ (1\leq j\leq n)$.
By definition, $\mathfrak{a}$ is a subalgebra of $\mathfrak{h}$ which is of the form $\mathfrak{gl}(W_1)\oplus\cdots\oplus\mathfrak{gl}(W_n)$. Let $\mathfrak{l}$ be a maximal reductive subalgebra of $\mathfrak{h}$ of this type, namely, maximal among all reductive subalgebras of $\mathfrak{h}$ containing $\mathfrak{a}$ which is of the form $\mathfrak{gl}(V_1)\oplus\cdots\oplus\mathfrak{gl}(V_r)$ for some direct sum decomposition $\mathbb{R}^n=V_1\oplus\cdots\oplus V_r$ where each $V_j$ is spanned by a subset of the standard basis. We set
\[
n_k:=\dim V_k \text{ and } m_k\equiv m_k(I):=\#\{i\in I\mid f_i\in V_k\} \text{ so that}
\]
\[
n_1+\cdots+n_r=n,\ m_1+\cdots+m_r=\#I \text{ and } 0\leq m_k\leq n_k,\ \text{for all }k\leq r.
\]
Similarly to (\ref{eqn:epsilonij}), we set $\varepsilon_{k\ell}:=1$ if $\operatorname{Hom}_\mathbb{R}(V_\ell,V_k)\subset \mathfrak{h}$, and $\varepsilon_{k\ell}:=0$ otherwise. 
One has $\varepsilon_{kk}=1\ (1\leq k\leq r)$ and $\varepsilon_{k\ell}+\varepsilon_{\ell k}\in\{0,1\}$ by the maximality of $\mathfrak{l}$. 
To compute $\rho_\mathfrak{g}(E_I)$ and $\rho_\mathfrak{h}(E_I)$, we first observe 
\[
\operatorname{ad}(E_{aa})E_{ij}=(\delta_{ai}-\delta_{aj})E_{ij},
\]
 where $\delta_{ab}$ denotes the Kronecker delta. Hence one has
\[
\operatorname{ad}(E_I)E_{ij}=
\begin{cases}
E_{ij}&\text{if }i\in I, j\not\in I,\\
-E_{ij}&\text{if }i\not\in I, j\in I,\\
0&\text{otherwise.}
\end{cases}
\]
Summing up the absolute values of the eigenvalues of $\operatorname{ad}(E_I)$ on $\mathfrak{g}$, one has
\[
\rho_\mathfrak{g}(E_I)=\#I(n-\#I)=\sum_{1\leq k,\ell\leq r}m_k(n_\ell-m_\ell).
\]
Similarly, summing up the absolute values of the eigenvalues of $\operatorname{ad}(E_I)$ on $\mathfrak{h}$, one has from the definition of $\varepsilon_{k\ell}$ the following formula:
\[
2\rho_\mathfrak{h}(E_I)=\sum_{1\leq k, \ell\leq r}\varepsilon_{k\ell}(m_k(n_\ell-m_\ell)+m_\ell(n_k-m_k))
=\sum_{1\leq k,\ell\leq r}b_{k\ell}m_k(n_\ell-m_\ell),
\]
where $b_{kk}=2$ and $b_{k\ell}=\varepsilon_{k\ell}+\varepsilon_{\ell k}\ (k\neq\ell)$. Hence, setting $a_{kk}=1$ and $a_{k\ell}=1+\frac{p}{2}(\varepsilon_{k\ell}+\varepsilon_{\ell k}-2)$, one has
\[
p\rho_\mathfrak{h}(E_I)-(p-1)\rho_\mathfrak{g}(E_I)=\sum_{1\leq k,\ell\leq r}a_{k\ell}m_k(n_\ell-m_\ell). 
\]
Since $\varepsilon_{k\ell}+\varepsilon_{\ell k}\in\{0,1\}$, we see $a_{k\ell}\in\{1-p,1-\frac{p}{2}\}$ for all $k\neq \ell$, in particular, $a_{k\ell}$ are non-positive integers when $p$ is even. 
Hence Lemma \ref{lem:Bp} follows from Lemma \ref{lem:quadraineq} below.

\subsection{Quadratic inequalities}
This section is independent of the previous one. We forget about Lie algebras. We fix integers $r\geq 1,\ n_1,\ldots,n_r\geq1$ and $(a_{k\ell})_{1\leq k,\ell\leq r}$ a symmetric matrix with integer coefficients which are equal to $1$ on the diagonal and are non-positive outside the diagonal:
\[
a_{k\ell}=a_{\ell k}\in-\mathbb{N} \text{ for all } k\neq \ell \text{ and } a_{\ell\ell}=1 \text{ for all }\ell. 
\]
Here, we used the notation $\mathbb{N}=\{0,1,2,\ldots\}$. We denote by $\mathbf{e}_\ell\in\mathbb{N}^r$ the $r$-tuple $\mathbf{e}_\ell=(\delta_{k,\ell})_{1\leq k\leq r}$. We fix $\mathbf{n}=(n_1,\ldots,n_r)\in\mathbb{N}^r$, and set
\[
f(\mathbf{m})=\sum_{1\leq k,\ell\leq r}a_{k\ell}m_k(n_\ell-m_\ell) \text{ for } \mathbf{m}=(m_1,\ldots,m_r)\in\mathbb{N}^r. 
\]
For instance, one has $f(\mathbf{e}_{\ell_0})=n_{\ell_0}-1+\sum_{\ell\neq\ell_0}a_{\ell_0\ell}n_\ell$.

\begin{lemma}\label{lem:quadraineq}
Assume that $f(\mathbf{e}_\ell)\leq 0$ for all $1\leq \ell \leq r$. Then one has $f(\mathbf{m})\leq0$ for all $\mathbf{m}$ in $\mathbb{N}^r$ with $\mathbf{n}-\mathbf{m}\in\mathbb{N}^r$. 
\end{lemma}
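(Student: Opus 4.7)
My plan is to induct on $|\mathbf{m}| := m_1 + \cdots + m_r$, the base cases $|\mathbf{m}| \le 1$ being immediate from $f(\mathbf{0}) = 0$ and the hypothesis $f(\mathbf{e}_\ell) \le 0$.

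The engine of the induction is the one-variable identity
\[ f(\mathbf{m}) \;=\; f(\mathbf{m} - \mathbf{e}_\ell) + B_\ell(\mathbf{m}) + 1, \qquad B_\ell(\mathbf{m}) := (A\mathbf{n})_\ell - 2(A\mathbf{m})_\ell, \]
valid whenever $m_\ell \ge 1$, where $A := (a_{k\ell})$. It comes from expanding $f$ as a quadratic in the $\ell$-th coordinate: a direct computation using $a_{\ell\ell} = 1$ and symmetry of $A$ gives $f(\mathbf{m} + t\mathbf{e}_\ell) = f(\mathbf{m}) + tB_\ell(\mathbf{m}) - t^2$, so the specialization $t = -1$ yields the identity.

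For the inductive step I argue by contradiction. Assume $\mathbf{m}\le\mathbf{n}$, $|\mathbf{m}|\ge 2$, and $f(\mathbf{m})>0$. For each $\ell$ with $m_\ell\ge 1$, the inductive hypothesis gives $f(\mathbf{m}-\mathbf{e}_\ell)\le 0$, so the identity forces $B_\ell(\mathbf{m}) > -1$. Since $A$, $\mathbf{n}$, $\mathbf{m}$ have integer entries, $B_\ell(\mathbf{m})$ is an integer, hence $B_\ell(\mathbf{m})\ge 0$. Combined with the hypothesis $(A\mathbf{n})_\ell \le 1$ (which is $f(\mathbf{e}_\ell)\le 0$ rewritten), this gives $2(A\mathbf{m})_\ell \le 1$; by integrality again, $(A\mathbf{m})_\ell \le 0$. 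For $\ell$ with $m_\ell = 0$, the bound $(A\mathbf{m})_\ell = \sum_{k\neq\ell} a_{k\ell} m_k \le 0$ is automatic from $a_{k\ell}\le 0$ and $m_k\ge 0$. Hence $(A\mathbf{m})_\ell \le 0$ for every $\ell$, and by symmetry of $A$,
\[ f(\mathbf{m}) \;=\; \mathbf{m}^T A(\mathbf{n}-\mathbf{m}) \;=\; \sum_\ell (A\mathbf{m})_\ell (n_\ell - m_\ell) \;\le\; 0, \]
contradicting $f(\mathbf{m})>0$ and closing the induction.

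The main obstacle is spotting how to exploit the integrality of the $a_{k\ell}$. The identity only yields the strict real inequality $B_\ell(\mathbf{m}) > -1$ on the nose, which would be far too weak; it is only because $B_\ell(\mathbf{m})$ and $(A\mathbf{m})_\ell$ are integer-valued that one can successively upgrade to $B_\ell \ge 0$ and $(A\mathbf{m})_\ell \le 0$, after which the termwise sign analysis collapses the argument. This also clarifies why Lemma \ref{lem:Bp} insists on $p$ even: only then are the coefficients $a_{k\ell} = 1 + \tfrac{p}{2}(\varepsilon_{k\ell}+\varepsilon_{\ell k}-2)$ integral.
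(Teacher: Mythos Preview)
Your proof is correct and follows essentially the same approach as the paper: induction on $|\mathbf{m}|$ via the same one-step identity (the paper writes it as $f(\mathbf{m})=f(\mathbf{m}')+f(\mathbf{e}_\ell)+2-2\sum_k a_{k\ell}m_k$, which is your $f(\mathbf{m}-\mathbf{e}_\ell)+B_\ell(\mathbf{m})+1$ after substituting $f(\mathbf{e}_\ell)=(A\mathbf{n})_\ell-1$), together with integrality to split into the cases $(A\mathbf{m})_\ell\ge 1$ for some $\ell$ versus $(A\mathbf{m})_\ell\le 0$ for all $\ell$. The only cosmetic difference is that the paper handles these two cases directly, whereas you package the same dichotomy as a proof by contradiction.
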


\begin{proof}
We argue by induction on $s:=\sum_km_k$. Our assumption tells us that the conclusion is true for $s\leq 1$. We assume $s\geq2$ and distinguish two cases.
\begin{itemize}
\setlength{\itemindent}{0.8cm}
\item[{\bf Case 1}]: there exists $1\leq \ell\leq r$ such that $\sum_k a_{k\ell}m_k\geq1$. 

In this case, we fix such an $\ell$. Since $a_{k\ell}\leq 0$ for all $k\neq\ell$ and $a_{\ell\ell}>0$, we can write $\mathbf{m}=\mathbf{m'}+\mathbf{e}_\ell$ with $\mathbf{m}'\in\mathbb{N}^r$. Since $a_{\ell\ell}=1$ and $a_{\ell k}=a_{k\ell}$, one has
\[
f(\mathbf{m})=f(\mathbf{m}')+f(\mathbf{e}_\ell)+2-2\sum_k a_{k\ell}m_k. 
\]
Using our assumptions and the induction hypothesis, we get $f(\mathbf{m})\leq0$. 

\item[{\bf Case2}]: For all $1\leq \ell\leq r$, one has $\sum_k a_{k\ell}m_k\leq0$. 

In this case, since $n_\ell-m_\ell\geq0$ for all $\ell$, the inequality $f(\mathbf{m})\leq 0$ follows directly from the definition of $f(\mathbf{m})$.
\end{itemize}

Since the coefficients $a_{k\ell}$ are integers, these two cases are the only possibilities and this ends the proof of Lemma \ref{lem:quadraineq} and hence of Lemma \ref{lem:Bp}.
\end{proof}

\subsection{$L^p$-estimate of matrix coefficients}
\label{subsec:Lp}

When $H$ is reductive, Lemma \ref{lem:Bp} determines an explicit bound of $p$ such that $L^2(G/H)$ is almost $L^p$. We end this section with the following: 

\begin{theorem}
\label{prop:Lp}
Let $n_1+\cdots+n_r\leq n$ and $p\in2\mathbb{N}$. We set $m:=\max(n_1,\ldots,n_r)$.
Then one has the equivalence:
\begin{itemize}
\item[(i)] $L^2(GL_n(\mathbb{R})/(GL_{n_1}(\mathbb{R})\times\cdots\times GL_{n_r}(\mathbb{R})))$ is almost $L^p$. 
\item[(ii)] $m\leq n-\frac{n-1}{p}$. 
\end{itemize}
\end{theorem}

The case $p=2$ was proved in \cite[Thms.\ 1.4 and 3.1]{BK-III}. 

\begin{proof}
For $\mathfrak{h}=\mathfrak{gl}_{n_1}(\mathbb{R})\oplus\cdots\oplus\mathfrak{gl}_{n_r}(\mathbb{R})$, 
we set
\begin{align}
\label{eqn:c0}
c\equiv c(\mathfrak{h}):=\min_{1\leq i\leq n}\frac{\rho_\mathfrak{g}(E_{ii})}{\rho_\mathfrak{h}(E_{ii})}=\frac{2(n-1)}{\max_{1\leq i\leq n}\dim\operatorname{Image}(\operatorname{ad}(E_{ii})\colon\mathfrak{h}\to\mathfrak{h})}.
\end{align}

By definition, $c(\mathfrak{h})=\frac{n-1}{m-1}$, and therefore $p_{\mathfrak{g}/\mathfrak{h}}=\frac{1}{c(\mathfrak{h})-1}=\frac{m-1}{n-m}$ by (\ref{eqn:edge}) and Lemma \ref{lem:Bp}. 
Then Theorem \ref{prop:Lp} follows from the criterion given in Fact \ref{fact:BK} (2). 
\end{proof}

\section{Appendix --- the opposite parabolic case}
\label{sec:Append}

So far we have discussed the temperedness of the tensor product representations $\Pi_1\otimes\Pi_2$ when $\Pi_1$ and $\Pi_2$ are induced from unitary representations of parabolic subgroups $P$ and $Q$ of $G=GL_n$, respectively, see Problem \ref{q:main}. In this appendix, we discuss Problem \ref{q:main} for other reductive groups $G$ under the assumption that $Q$ is the opposite parabolic subgroup of $P$. 
In this case $P\cap Q$ is a reductive subgroup, and we can utilize the list of pairs $(G,H)$ of real reductive algebraic groups for which $L^2(G/H)$ is non-tempered \cite{BK-III}. The main result of this section is the classification of the pairs $(G, P)$ for which $\operatorname{Ind}^G_P({\bf 1})\otimes\operatorname{Ind}^G_Q({\bf 1})$ is tempered, see Theorem \ref{thm:append}. 

To describe the classification, we fix some notation. For a reductive Lie group $L$, there is a unique maximal connected normal non-compact semisimple subgroup, denoted by $L_{ns}$, to which we refer as the {\it non-compact semisimple part} of $L$. Its Lie algebra $\mathfrak{l}_{ns}$ is an ideal of $\mathfrak{l}$ contained in $[\mathfrak{l},\mathfrak{l}]$.

In what follows, we assume that the real simple Lie group $G$ has at most finitely many connected components and that the identity component has finite center.

\begin{theorem}\label{thm:append}
Let $G$ be a non-compact real simple Lie group, $P$ a proper parabolic subgroup, and $Q$ the opposite parabolic. 
We set $L:=P\cap Q$, which is a Levi subgroup of $P$ (and also of $Q$). We write $\mathfrak{l}$ for the Lie algebra of $L$.
Then the following three conditions on the pair $(G,P)$ are equivalent: 
\begin{itemize}
\item[(i)] The tensor product representation $\operatorname{Ind}^G_P(\mathbf{\sigma})\otimes\operatorname{Ind}^G_Q(\mathbf{\tau})$ is tempered for all unitary representations $\sigma$ of $P$ and $\tau$ of $Q$. 
\item[(ii)] The tensor product representation $\operatorname{Ind}^G_P(\mathbf{1})\otimes\operatorname{Ind}^G_Q(\mathbf{1})$ is tempered. 
\item[(iii)]
One of the following conditions holds: 
\begin{enumerate}
\setlength{\itemindent}{1cm}
\item[Case (a).]
$P$ is any proper parabolic subgroup when $\operatorname{rank}_\mathbb{R}\mathfrak{g}=1$. 
\item[Case (b).]
$P$ is any proper parabolic subgroup when $\mathfrak{g}=\mathfrak{su}(p,q)$ $(p+q\leq5)$, $\mathfrak{so}(p,q)\ (p+q\leq6)$, $\mathfrak{sp}(p,q)\ (p+q\leq4)$, $\mathfrak{e}_{6(2)},\mathfrak{e}_{6(-14)},\mathfrak{e}_{6(-26)},\mathfrak{f}_{4(4)}$, $\mathfrak{f}_{4,\mathbb{C}},\mathfrak{g}_{2(2)}$, or $\mathfrak{g}_{2,\mathbb{C}}$.
\item[Case (c).]
$\mathfrak{g}$ is complex simple or split. 
The semisimple part $[\mathfrak{l},\mathfrak{l}]$ of $\mathfrak{l}$ is not in the list of Table \ref{tab:tempsplit}. 
\item[Case (d).]
$\mathfrak{g}$ is neither complex nor split. The semisimple part $[\mathfrak{l},\mathfrak{l}]$ or its non-compact semisimple part $\mathfrak{l}_{ns}$ is not in the list of Table \ref{tab:tempnonsplit}. 
\end{enumerate}
\end{itemize}
\end{theorem}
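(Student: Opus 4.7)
The plan is to parallel the proof of Theorem \ref{thm:tensor}: first reduce the tensor-product question to a regular-representation question on $G/L$, then apply the Herz majoration principle to settle the equivalence of (i) and (ii), and finally invoke the classification of \cite{BK-III} to settle the equivalence of (ii) and (iii).

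First I would observe that since $Q$ is the opposite parabolic of $P$, one has $P\cap Q = L$ and $PQ$ is Zariski open and dense in $G$. The diagonal map $G\to G\times G$ therefore induces an open dense embedding $G/L\hookrightarrow G/P\times G/Q$ and, after matching half-density bundles in the same way as in Section \ref{subsec:pftensor}, a $G$-equivariant unitary equivalence
\[
\operatorname{Ind}^G_P(\mathbf{1})\otimes\operatorname{Ind}^G_Q(\mathbf{1})\simeq L^2(G/L).
\]
More generally, $\operatorname{Ind}^G_P(\sigma)\otimes\operatorname{Ind}^G_Q(\tau)$ is unitarily equivalent to an induced representation $\operatorname{Ind}^G_L(\widetilde\pi)$ for a unitary representation $\widetilde\pi$ of $L$ built from $\sigma|_L$ and $\tau|_L$ together with the half-density twist. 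The implication (i)$\Rightarrow$(ii) is immediate on setting $\sigma=\tau=\mathbf{1}$, and the converse (ii)$\Rightarrow$(i) follows from Lemma \ref{lem:Herz} applied to the reductive subgroup $L$: if $L^2(G/L)$ is tempered, then $\operatorname{Ind}^G_L(\widetilde\pi)$ is tempered for every unitary $\widetilde\pi$. Hence the equivalence of (i) and (ii) is formal.

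It remains to prove (ii)$\iff$(iii). By Fact \ref{fact:BK}(1) applied to the reductive pair $(G,L)$, condition (ii) is the inequality $2\rho_{\mathfrak{l}}\leq\rho_{\mathfrak{g}}$ on a maximal split abelian subalgebra of $\mathfrak{l}$. The strategy is to enumerate the Levi subalgebras $\mathfrak{l}$ of proper parabolics of $\mathfrak{g}$ satisfying this inequality using the list of non-tempered reductive pairs in \cite{BK-III}, and to regroup the answer according to the real form of $\mathfrak{g}$. Case (a), $\operatorname{rank}_\mathbb{R}\mathfrak{g}=1$, is immediate because the Levi of any proper parabolic then has the form $\mathfrak{m}\oplus\mathfrak{a}$ with $\mathfrak{m}$ compact, so $\rho_{\mathfrak{l}}$ vanishes on the split Cartan and the inequality is automatic. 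Case (b) is a finite list of low-dimensional real forms for which one checks directly, using the explicit root datum of each Lie algebra in the list, that the inequality holds for every Levi. Cases (c) and (d) are proved by transcribing the BK-III tables for complex/split groups and for the remaining non-split non-complex groups into Tables \ref{tab:tempsplit} and \ref{tab:tempnonsplit}, restricted to the sublist where $H$ is the Levi of a proper parabolic.

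The main obstacle is not conceptual but bookkeeping. Since the abelian part of $\mathfrak{l}$ lies in the split centre and the compact part of $[\mathfrak{l},\mathfrak{l}]$ contributes nothing to $\rho_{\mathfrak{l}}$ on the split Cartan, the inequality depends only on the isomorphism class of $[\mathfrak{l},\mathfrak{l}]$ when $\mathfrak{g}$ is split or complex, and only on $\mathfrak{l}_{ns}$ otherwise; this reduction is what justifies stating cases (c) and (d) in terms of these invariants. The delicate step is to verify that the forbidden entries of Tables \ref{tab:tempsplit} and \ref{tab:tempnonsplit} exhaust all Levi-type non-tempered pairs arising in BK-III, and to confirm that the same abstract semisimple Lie algebra, possibly appearing as $[\mathfrak{l},\mathfrak{l}]$ of several non-conjugate Levis inside $\mathfrak{g}$, always yields the same answer; this is the content of a case-by-case consistency check that closes the argument.
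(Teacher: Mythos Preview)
Your proposal is correct and follows essentially the same route as the paper: reduce $\operatorname{Ind}^G_P(\mathbf{1})\otimes\operatorname{Ind}^G_Q(\mathbf{1})$ to $L^2(G/L)$ via the open dense embedding $G/L\hookrightarrow G/P\times G/Q$, settle (i)$\iff$(ii) by the Herz majoration principle, and then read off (ii)$\iff$(iii) from the classification in \cite{BK-III}. The paper makes two points slightly more explicit than you do. First, the reduction to $\mathfrak{l}_{ns}$ in the non-split case is quoted from \cite[Prop.\ 3.1]{BK-II} rather than argued via $\rho$-functions; your heuristic is correct but would need the observation that the inequality $2\rho_{\mathfrak{l}}\le\rho_{\mathfrak{g}}$ on mixed elements of the split Cartan (centre plus semisimple) follows from the inequality on the semisimple part alone, which is not entirely automatic. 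Second, the paper singles out the real forms $\mathfrak{su}^*(4m-2)$, $\mathfrak{e}_{6(-26)}$, $\mathfrak{e}_{6(-14)}$ as the cases where temperedness of $L^2(G/L_{ns})$ is \emph{not} equivalent to temperedness of $L^2(G_{\mathbb{C}}/(L_{ns})_{\mathbb{C}})$, so that one must appeal to \cite[Thm.\ 1.4 (ii)--(iv)]{BK-III} directly rather than to the complex/split tables; this is exactly the ``delicate consistency check'' you flag but do not locate.
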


\begin{table}[!h]
\caption{$\mathfrak{g}$ is complex or split}
\label{tab:tempsplit}
\[
{\arraycolsep=5mm
\begin{array}{|c|cc|}
\hline
\mathfrak{g}&[\mathfrak{l},\mathfrak{l}]&\\\hline
\mathfrak{a}_n&\mathfrak{a}_{n_1}\oplus\cdots\oplus\mathfrak{a}_{n_k}&2\max_{1\leq j\leq k}n_j\geq n+1\\
\mathfrak{b}_n&\mathfrak{a}_{n_1}\oplus\cdots\oplus\mathfrak{a}_{n_k}\oplus\mathfrak{b}_m&2m\geq n+1\\
\mathfrak{c}_n&\mathfrak{a}_{n_1}\oplus\cdots\oplus\mathfrak{a}_{n_k}\oplus\mathfrak{c}_m&2m\geq n+1\\
\mathfrak{d}_n&\mathfrak{a}_{n_1}\oplus\cdots\oplus\mathfrak{a}_{n_k}\oplus\mathfrak{d}_m&2m\geq n+2\\
\mathfrak{d}_n&\mathfrak{a}_{n-1}&n\geq3\\
\mathfrak{e}_6&\mathfrak{d}_5&\\
\mathfrak{e}_7&\mathfrak{d}_6\text{ or }\mathfrak{e}_6&\\
\mathfrak{e}_8&\mathfrak{e}_7&\\\hline
\end{array}}
\]
\end{table}

\begingroup
\renewcommand{\arraystretch}{1.2}
\begin{table}[!h]
\caption{$\mathfrak{g}$ is neither complex nor split}
\label{tab:tempnonsplit}
\[
{\arraycolsep=5mm
\begin{array}{|c|cc|}
\hline
\mathfrak{g}&\mathfrak{l}_{ns}&\\\hline
\mathfrak{su}(p,q)&\mathfrak{su}(p-k,q-k)&1\leq k\leq \min\left(p-1,q-1,\frac{p+q-2}{4}\right)\\
\mathfrak{so}(p,q)&\mathfrak{so}(p-k,q-k)&1\leq k\leq \min\left(p-1,q-1,\frac{p+q-3}{4}\right)\\
\mathfrak{sp}(p,q)&\mathfrak{sp}(p-k,q-k)&1\leq k\leq \min\left(p-1,q-1,\frac{p+q-1}{4}\right)\\\hline
\mathfrak{g}&[\mathfrak{l},\mathfrak{l}]&\\\hline
\mathfrak{su}^*(2n)&\bigoplus_{j=1}^k\mathfrak{su}^*(2m_j)&2\max_{1\leq j\leq k} m_j\geq n+2\\
\mathfrak{so}^*(4n)&\mathfrak{su}^*(2n)&n\geq2\\
\mathfrak{so}^*(2n)&\mathfrak{so}^*(2m)\oplus\bigoplus_{j=1}^k\mathfrak{su}^*(2m_j)&m\geq n+2\\
\mathfrak{e}_{7(-5)}&\mathfrak{so}^*(12)&\\
\mathfrak{e}_{7(-25)}&\mathfrak{so}(2,10)\text{ or }\mathfrak{e}_{6(-26)}&\\
\mathfrak{e}_{8(-24)}&\mathfrak{e}_{7(-25)}&\\\hline
\end{array}}
\]
\end{table}
\endgroup

\newpage

\begin{proof}
Since the diagonal map $G\to G\times G$ induces an open dense embedding $\iota\colon G/L\hookrightarrow G/P\times G/Q$, the tensor product representation $\operatorname{Ind}^G_P(\sigma)\otimes\operatorname{Ind}^G_Q(\tau)$ is unitarily equivalent to $\operatorname{Ind}^G_L(\sigma\otimes\tau)$ via the pullback $\iota^*$. Then the equivalence (i) $\iff$ (ii) follows from the Herz majoration principle (Lemma \ref{lem:Herz}) as in Theorem \ref{thm:tensor}.

To see the equivalence (ii) $\iff$ (iii), we may and do assume that $G$ is an algebraic Lie group without loss of generality by \cite[Cor.\ 3.3 and Rem.\ 3.4]{BK-II}.
We shall write $G_\mathbb{C}$ for a complex Lie group which contains $G$ as a real form.

The tensor product representation $\operatorname{Ind}^G_P(\mathbf{1})\otimes\operatorname{Ind}^G_Q(\mathbf{1})$ is unitarily equivalent to $L^2(G/L)$ via the pullback $\iota^*$. 
So our main task is to give a classification of the Levi subgroups $L$ of $G$ such that the regular representation on $L^2(G/L)$ is non-tempered.

We divide the proof into the following cases.
\begin{itemize}
\setlength{\itemindent}{1.2cm}
\item[Case I.] $G$ is complex or split. 
\setlength{\itemindent}{1.3cm}
\item[Case II.] $G$ is neither complex nor split.
\begin{itemize}
\setlength{\itemindent}{1.7cm}
\item[Case II-a.] $\mathfrak{g}\neq\mathfrak{sl}(2n-1,\mathbb{H}),\mathfrak{e}_{6(-26)}$, or $\mathfrak{e}_{6(-14)}$.
\item[Case II-b.] $\mathfrak{g}=\mathfrak{sl}(2n-1,\mathbb{H}),\mathfrak{e}_{6(-26)}$, or $\mathfrak{e}_{6(-14)}$.
\end{itemize}
\end{itemize}

{\bf Case I.} $G$ is complex or split. In this case one can read the list of the pairs $(G,L)$ such that $L^2(G/L)$ is non-tempered from the classification results of tempered reductive homogeneous spaces in \cite[Thms.\ 3.1 and 4.1]{BK-III} and from a description of Levi subgroups $L$ of complex Lie algebras by the Dynkin diagram. 
We illustrate the argument by taking $\mathfrak{g}$ to be $\mathfrak{a}_n^\mathbb{C},\mathfrak{e}_8^\mathbb{C}$ or their split real forms as examples.
For instance, let $\mathfrak{g}=\mathfrak{sl}_{n+1}(\mathbb{C})$ and $\mathfrak{l}$ be any Levi subalgebra. Then the semisimple part $[\mathfrak{l}, \mathfrak{l}]$ of $\mathfrak{l}$ is of the form $\mathfrak{sl}_{m_1}(\mathbb{C})\oplus\cdots\oplus\mathfrak{sl}_{m_k}(\mathbb{C})$ for some $m_1, \ldots, m_k(\geq2)$ with $m_1+\cdots+m_k\leq n+1$. 
By \cite[Cor.\ 3.3]{BK-II}, $L^2(G/L)$ is non-tempered if and only if $L^2(G/[L, L])$ is non-tempered. By \cite[Thm.\ 3.1]{BK-III}, this happens if and only if $[\mathfrak{l}, \mathfrak{l}]$ contains $\mathfrak{sl}_p(\mathbb{C})$ as an ideal for some $p$ with $p\geq (n+1-p)+2$ or coincides with $\mathfrak{sp}_p(\mathbb{C})$ ($2p=n+1$). The former happens when $2\max m_j\geq n+3$ and the latter never happens in our setting. Putting $n_j =m_j-1$, we conclude that $\operatorname{Ind}^G_P(\mathbf{1})\otimes\operatorname{Ind}^G_Q(\mathbf{1})$ is non-tempered if and only if $2\max n_j \geq n+1$. 
The same conclusion holds if $\mathfrak{g}$ is the split real form $\mathfrak{sl}_{n+1}(\mathbb{R})$ of $\mathfrak{sl}_{n+1}(\mathbb{C})$ by \cite[Prop.\ 5.2]{BK-I}. 
This shows the first row in the Table 1.
Of course, the conclusion matches Theorem \ref{thm:tensor}
because $d(P)+d(Q)=2\max m_j=2\max(n_j+1)$
(We note that $n_j$ and $n$ in Theorem \ref{thm:tensor} are $n_j+1$ and $n+1$ here. )

If $\mathfrak{g}$ is a complex simple Lie algebra $\mathfrak{e}_8^\mathbb{C}$, then the classification in \cite[Thm.\ 4.1]{BK-III} tells us that $L^2(G/H)$ is non-tempered if and only if $\mathfrak{e}_7^\mathbb{C}\subset \mathfrak{h}\subset\mathfrak{e}_7^\mathbb{C}\oplus\mathfrak{sl}_2(\mathbb{C})$ when $H$ is a (proper) complex reductive subgroup of $G$. On the other hand, the Dynkin diagram of type $E_8$ shows that a Levi subalgebra $\mathfrak{l}$ of $\mathfrak{e}_8^\mathbb{C}$ containing $\mathfrak{e}_7^\mathbb{C}$ is either $\mathfrak{e}_7^\mathbb{C}\oplus\mathbb{C}$ or $\mathfrak{e}_8^\mathbb{C}$ itself. This gives the last row in Table 1. The same conclusion holds for the split real form by \cite[Prop.\ 5.2]{BK-I}.
Table 1 for other (complex or split) simple Lie algebras is obtained similarly by using the Dynkin diagram and \cite[Thms.\ 3.1 and 4.1]{BK-III}.

{\bf Case II.} $G$ is neither complex nor split. 
We recall from \cite[Prop.\ 3.1]{BK-II} that $L^2(G/L)$ is tempered if and only if $L^2(G/L_{ns})$ is tempered. 
Thus the condition (ii) is equivalent to that $L^2(G/L_{ns})$ is tempered. 
We note that the non-compact semisimple factor $L_{ns}$ may be much smaller than $L$ in Case II.  
Accordingly, it may well happen that $L^2(G/L)$ is tempered but $L^2(G_\mathbb{C}/L_\mathbb{C})$ is not tempered. 
This means that the tensor product representation $\operatorname{Ind}^G_P(\mathbf{1})\otimes\operatorname{Ind}^G_Q(\mathbf{1})$ in Case II is more likely to be tempered than $\operatorname{Ind}_{P_\mathbb{C}}^{G_\mathbb{C}}(\mathbf{1})\otimes\operatorname{Ind}_{Q_\mathbb{C}}^{G_\mathbb{C}}(\mathbf{1})$ which was treated in Case I. 

For example, if $\operatorname{rank}_\mathbb{R}G=1$, then any (proper) parabolic subgroup is a minimal parabolic subgroup, hence $L_{ns}=\{e\}$ and thus $L^2(G/L)$ is tempered. 
For the computation of $\mathfrak{l}_{ns}$ in the general case, we can use the Satake diagram, which we recall now, see \cite[Chap.\ 10]{Hl} for example. 

Let $\mathfrak{g}=\mathfrak{k}+\mathfrak{p}$ be a Cartan decomposition, $\mathfrak{a}$ a maximal abelian subspace in $\mathfrak{p}$, and extend $\mathfrak{a}$ to a Cartan subalgebra $\mathfrak{j}$ of $\mathfrak{g}$. We take compatible positive systems $\Delta^+(\mathfrak{g}_\mathbb{C},\mathfrak{j}_\mathbb{C})$ and $\Sigma^+(\mathfrak{g},\mathfrak{a})$ such that
$\alpha|_\mathfrak{a}\in\Sigma^+(\mathfrak{g},\mathfrak{a})\cup\{0\}$, whenever $\alpha\in\Sigma^+(\mathfrak{g}_\mathbb{C},\mathfrak{j}_\mathbb{C})$. Then one has a surjective map $r\colon\Psi\to\Phi\cup\{0\}$ where $\Psi$ and $\Phi$ are the sets of simple roots of $\Delta^+(\mathfrak{g}_\mathbb{C},\mathfrak{j}_\mathbb{C})$ and $\Sigma^+(\mathfrak{g},\mathfrak{a})$, respectively. The Satake diagram is an enriched Dynkin diagram for $\Psi$ by coloring $r^{-1}(\{0\})$ black and by connecting two white nodes $\alpha\neq\beta$ by arrows if $r(\alpha)=r(\beta)\neq0$. Any Levi subalgebra $\mathfrak{l}$ of a real semisimple Lie algebra $\mathfrak{g}$ is conjugate to $\bigoplus\mathfrak{g}(\mathfrak{a};\lambda)$ by an inner automorphism of $\mathfrak{g}$, where the sum is taken over all $\lambda$ in the $\mathbb{Z}$-span of $S$ for some subset $S$ of $\Phi$ . Then the Dynkin diagram for $\Delta(\mathfrak{l}_\mathbb{C},\mathfrak{j}_\mathbb{C})$ of the complexified Lie algebra $\mathfrak{l}_\mathbb{C}$ is given by $r^{-1}(S\cup\{0\})$. Let $V$ be the union of the connected components in $r^{-1}(S\cup\{0\})$ that consist of black nodes in the Satake diagram. Then the Dynkin diagram for $(\mathfrak{l}_{ns})_\mathbb{C}$ is given by $r^{-1}(S\cup\{0\})\setminus V$. With this in mind, we apply the classification theory in \cite{BK-III} as follows. 

{\bf Case II-a. }Assume $\mathfrak{g}$ is not isomorphic to $\mathfrak{sl}(2n-1, \mathbb{H}) (\simeq\mathfrak{su}^*(4n-2))$, $\mathfrak{e}_{6(-26)}$, or $\mathfrak{e}_{6(-14)}$.
Then \cite[Thm.\ 1.4]{BK-III} shows that the following conditions are equivalent:
\begin{itemize}
\item $L^2(G/L_{ns})$ is a tempered representation of $G$;
\item $L^2(G_\mathbb{C}/(L_{ns})_\mathbb{C})$ is a tempered representation of $G_\mathbb{C}$. 
\end{itemize}
In this case, we apply the classification result in \cite[Thms.\ 3.1 and 4.1]{BK-III} to the complex homogeneous space $G_\mathbb{C}/(L_{ns})_\mathbb{C}$. 
We illustrate the argument by taking $\mathfrak{g}=\mathfrak{su}(p,q)$ and real forms of $\mathfrak{e}_7^\mathbb{C}$ as examples. 
First, let us consider $\mathfrak{g}=\mathfrak{su}(p, q)\ (p\geq q)$. Then any Levi subalgebra $\mathfrak{l}$ of $\mathfrak{g}$ is of the form $\mathfrak{l}\simeq\bigoplus_{j=1}^\ell\mathfrak{gl}_{m_j}(\mathbb{C})\oplus\mathfrak{su}(p-k, q-k)$ where $m_1+\cdots+m_\ell=k\leq q$. 
Accordingly, the complexification of $\mathfrak{l}_{ns}$ is given as $(\mathfrak{l}_{ns})_\mathbb{C}\simeq\bigoplus_{j=1}^\ell(\mathfrak{sl}_{m_j}(\mathbb{C})\oplus\mathfrak{sl}_{m_j}(\mathbb{C}))\oplus\mathfrak{sl}_{p+q-2k}(\mathbb{C})$ if $q>k$ and $(\mathfrak{l}_{ns})_\mathbb{C}\simeq\bigoplus_{j=1}^\ell(\mathfrak{sl}_{m_j}(\mathbb{C})\oplus\mathfrak{sl}_{m_j}(\mathbb{C}))$ if $q=k$. Thus \cite[Thm.\ 3.1]{BK-III} implies that $L^2(G_\mathbb{C}/(L_{ns})_\mathbb{C})$ is non-tempered if and only if $2(p+q-2k)\geq p+q+2$ and $(p\geq)q>k$. This shows the first row in Table 2. 

Next let us treat real forms of $\mathfrak{e}_7^\mathbb{C}$. 
By the classification \cite[Thm.\ 4.1]{BK-III}, for any real form $\mathfrak{g}$ of $\mathfrak{e}_7^\mathbb{C}$, $L^2(G_\mathbb{C}/(L_{ns})_\mathbb{C})$ is non-tempered if and only if $(\mathfrak{l}_{ns})_\mathbb{C}$ contains $\mathfrak{d}_6^\mathbb{C}$ or $\mathfrak{e}_6^\mathbb{C}$.
There are four real forms of $\mathfrak{e}_7^\mathbb{C}$, namely, a compact real form, $\mathfrak{e}_{7(7)}(=\text{EV})$, $\mathfrak{e}_{7(-5)}(=\text{EVI})$,  and $\mathfrak{e}_{7(-25)}(=\text{EVII})$. The second one is split, and was treated in Case I. For the remaining two real forms, the Satake diagrams are given as below. 
\[
\xymatrix@!=2pt@M=0.5pt{
&&&\bullet\ar@{-}[d]&&&&&&&\bullet\ar@{-}[d]&&\\
\bullet\ar@{-}[r]&\circ\ar@{-}[r]&\bullet\ar@{-}[r]&\circ\ar@{-}[r]&\circ\ar@{-}[r]&\circ&&\circ\ar@{-}[r]&\circ\ar@{-}[r]&\bullet\ar@{-}[r]&\bullet\ar@{-}[r]&\bullet\ar@{-}[r]&\circ\\
&&&\mathfrak{e}_{7(-5)}&&&&&&&\mathfrak{e}_{7(-25)}&&
}
\]
Then the non-compact semisimple part $\mathfrak{l}_{ns}$ of a (real) Levi subalgebra $\mathfrak{l}$ having the property $(\mathfrak{l}_{ns})_\mathbb{C}\supset\mathfrak{d}_6^\mathbb{C}\simeq\mathfrak{so}_{12}(\mathbb{C})$ or $\mathfrak{e}_6^\mathbb{C}$ is listed as follows.
\[
\xymatrix@!=2pt@M=0.5pt{
&&&&&\bullet\ar@{-}[d]&&&&&&&&\\
\mathfrak{g}=\mathfrak{e}_{7(-5)}\quad\quad&&\bullet\ar@{-}[r]&\circ\ar@{-}[r]&\bullet\ar@{-}[r]&\circ\ar@{-}[r]&\circ\ar@{-}&&&&&&&\\
&&&&\mathfrak{l}_{ns}\simeq\mathfrak{so}^*(12)&&&&&&&\\
&&&&&\bullet\ar@{-}[d]&&&&&\bullet\ar@{-}[d]&&\\
\mathfrak{g}=\mathfrak{e}_{7(-25)}\quad\quad&&\circ\ar@{-}[r]&\circ\ar@{-}[r]&\bullet\ar@{-}[r]&\bullet\ar@{-}[r]&\bullet\ar@{-}&&\circ\ar@{-}[r]&\bullet\ar@{-}[r]&\bullet\ar@{-}[r]&\bullet\ar@{-}[r]&\circ\ar@{-}\\
&&&&\mathfrak{l}_{ns}\simeq\mathfrak{so}(2,10)&&&&&&\mathfrak{l}_{ns}\simeq\mathfrak{e}_{6(-26)}&
}
\]
This shows the last two and three rows in Table 2.

{\bf Case II-b.} $\mathfrak{g}=\mathfrak{su}^*(4m-2)$, $\mathfrak{e}_{6(-26)}$, or $\mathfrak{e}_{6(-14)}$. In this case, it may happen that $L^2(G/H)$ is tempered but $L^2(G_\mathbb{C}/H_\mathbb{C})$ is not tempered for some reductive subgroup $H$ even when $H=H_{ns}$, see \cite[Thm.\ 1.4 (ii)-(iv)]{BK-III} for the list of such $H$. 
We need to take this exceptional case into account if such $H$ arises as the non-compact semisimple part $L_{ns}$ of a Levi subgroup $L$ of $G$. 
For example, suppose $\mathfrak{g}=\mathfrak{sl}(n,\mathbb{H})\ (\simeq\mathfrak{su}^*(2n))$. Then any Levi subalgebra $\mathfrak{l}$ of $\mathfrak{g}$ is of the form
\[
\mathfrak{gl}(m_1,\mathbb{H})\oplus\cdots\oplus\mathfrak{gl}(m_k,\mathbb{H})\simeq\bigoplus_{j=1}^k\mathfrak{su}^*(2m_j)\oplus\mathbb{R}^k, 
\]
where $m_1+\cdots+m_k=n$.
We may and do assume that $m_j>1$ for $1\leq j\leq \ell$ and $m_j=1$ for $\ell+1\leq j\leq k$. Then $\mathfrak{l}_{ns}\simeq\bigoplus_{j=1}^\ell\mathfrak{su}^*(2m_j)$ because $\mathfrak{su}^*(2)\simeq\mathfrak{su}(2)$.
By \cite[Thm.\ 1.4 (iii)]{BK-III}, the exceptional case occurs when $\max_{1\leq j\leq \ell}m_j=\frac{1}{2}(n+1)$, namely, $L^2(G/L_{ns})$ is non-tempered if and only if $\max_{1\leq j\leq \ell}m_j\neq \frac{1}{2}(n+1)$ and $L^2(G_\mathbb{C}/(L_{ns})_\mathbb{C})$ is non-tempered. The latter condition amounts to $\max_{1\leq j\leq \ell}m_j\geq\frac{1}{2}(n+1)$ by \cite[Thm.\ 3.1]{BK-III} (see also \cite[Ex.\ 8.8]{BK-III}) because $(\mathfrak{l}_{ns})_\mathbb{C}\simeq\bigoplus_{j=1}^\ell\mathfrak{sl}_{2m_j}(\mathbb{C})$. Hence $L^2(G/L)$ is non-tempered if and only if $2\max_{1\leq j\leq \ell}m_j>n+1$, or equivalently,  $2\max_{1\leq j\leq k}m_j>n+1$, as listed in Table 2. Other cases are similar and easier. 

This completes the proof of Theorem \ref{thm:append}.
\end{proof}

\section*{Acknowledgement}
The authors are grateful to the IHES and to The University of Tokyo for its support through the GCOE program.
The third author was partially supported by Grant-in-Aid for Scientific Research (A) 18H03669,  JSPS. 
We thank the anonymous referee for reading carefully the manuscript

{\small
\noindent
Y. \textsc{Benoist} :
CNRS-Universit\'e Paris-Saclay, Orsay, France, \newline
e-mail : \texttt{yves.benoist@math.u-psud.fr}

\medskip
\noindent
Y. \textsc{Inoue} : 
Graduate School of Mathematical Sciences, 
The University of Tokyo, Komaba, Japan, 
e-mail : \texttt{inoue-78u@gl.pen-kanagawa.ed.jp}

\medskip
\noindent
T. \textsc{Kobayashi} :
Graduate School of Mathematical Sciences, 
The University of Tokyo, Komaba,  Japan, 
e-mail : \texttt{toshi@ms.u-tokyo.ac.jp}

\end{document}